\documentclass[11pt]{amsart}
\usepackage{amssymb,amsmath}
\usepackage[all]{xy}
\usepackage{mathrsfs}

\setlength{\textwidth}{16cm}
\setlength{\textheight}{24cm}
\addtolength{\hoffset}{-2cm}
\addtolength{\voffset}{-1cm}

\newtheorem{thm}{Theorem}[section]
\newtheorem{lemma}[thm]{Lemma}
\newtheorem{prop}[thm]{Proposition}
\newtheorem{cor}[thm]{Corollary}

\newtheorem{Th}{Theorem}

\theoremstyle{definition}
\newtheorem{defn}[thm]{Definition}
\newtheorem{nota}[thm]{Notation}
\newtheorem{rem}[thm]{Remark}

\newcommand{\ls}[2]{{^{#1}\!{#2}}}
\newcommand{\qbox}[1]{\hspace{.2cm}\hbox{#1}\hspace{.2cm}}
\newcommand{\ovl}[1]{\overline{#1}}

\def\car{\trianglelefteq_{\operatorname{char}}}

\def\proj{(\operatorname{proj})}
\def\conj{\operatorname{conj}}

\def\Res{\operatorname{Res}\nolimits}
\def\Hom{\operatorname{Hom}\nolimits}
\def\End{\operatorname{End}\nolimits}
\def\Aut{\operatorname{Aut}\nolimits}
\def\Out{\operatorname{Out}\nolimits}

\def\Syl{\operatorname{Syl}\nolimits}
\def\GL{\operatorname{GL}\nolimits}

\def\Z{\mathbb Z}
\def\F{\mathbb F}
\def\CA{\mathcal A}

\def\CO{\mathscr O}

\title{On the orbit category on nontrivial $p$-subgroups and endotrivial modules}
\author{Nadia Mazza}
\address
{School of Mathematical Sciences\\ Lancaster University\\ Lancaster \\LA1 4YF, UK} 
\email{n.mazza@lancaster.ac.uk}
\keywords{endotrivial modules, orbit category on nontrivial $p$-subgroups, metacyclic $p$-groups}

\subjclass[2020]{Primary: 20C20, 20C33; secondary 20E15}
\date{\today}

\begin{document}

\begin{abstract} 
Let $p$ be a prime, let $G$ be a finite group of order divisible by $p$, and let $k$ be a field of characteristic $p$.
An endotrivial $kG$-module is a finitely generated $kG$-module $M$ such that its endomorphism algebra $\End_kM$ decomposes as the direct sum of a one-dimensional trivial $kG$-module and a projective $kG$-module.
In this article, we determine the fundamental group of the orbit category on nontrivial $p$-subgroups of $G$ for a large class of finite groups, and use Grodal's approach to describe the group of endotrivial modules for such groups.
Hence, we improve on the results about the group of endotrivial modules for finite groups with abelian Sylow $p$-subgroups obtained by Carlson and Th\'evenaz.
With some additional analysis, we then determine the fundamental group of the orbit category on nontrivial $p$-subgroups of $G$ and the group of endotrivial $kG$-modules in the case when $G$ has a metacyclic Sylow $p$-subgroup for $p$ odd.
\end{abstract}

\maketitle

\section{Introduction}\label{sec:intro}
Let $p$ be a prime, let $G$ be a finite group with order divisible by $p$, and let $k$ be a field of characteristic $p$.
It is well-known that the study of the modular representations of $G$ over $k$ in full generality is hopeless; however being able to describe certain classes of $kG$-modules can be feasible and reveal some intriguing relationships which lead to intradisciplinary applications. The endotrivial modules are such `useful' modules.
Originally introduced as {\em invertible modules} by J. Alperin in \cite{Alp0}, the term {\em endotrivial} (or rather {\em endo-trivial}) was coined by E. Dade in his pioneering work on endopermutation modules for finite $p$-groups in \cite{Da}.
Loosely, Alperin's approach presented endotrivial modules as the elements whose isomorphism classes generate the unit group of the stable module category of $G$, whereas Dade's viewpoint was an in-depth analysis of the structure of the sources of simple modules for a large class of finite groups.
In the last 15 years, the various studies of endotrivial modules have shown that ``they are important"; though their classification remains an open problem, see \cite{B,CT,Gr,Ma,NR} for instance.
 
In this article, we use Grodal's approach via homotopy theory (cf. \cite{Gr}) to describe the group of endotrivial modules for a large class of finite groups, comprising those with an abelian Sylow $p$-subgroup, and hence we determine the fundamental group of the orbit category on nontrivial $p$-subgroups for these groups, thus improving on the results obtained in \cite{CT}.
Some additional analysis enables us to do the same for a finite group with a split metacyclic Sylow $p$-subgroup for $p$ odd.

We now state our two main results (see Notation~\ref{ssec:nota} and Section~\ref{sec:et} for the notation).
We will recall how we compute $\pi_1(\CO^*_p(G))$ in Section \ref{sec:et}.

\begin{Th}\label{thm:main}
Let $p$ be a prime, let $G$ be a finite group with order divisible by $p$ and let $S$ be a Sylow $p$-subgroup of $G$.
Suppose that $\Omega_1(S)\leq Z(S)$ and that $N_G(S)$ controls $p$-fusion in $G$. 
Then
$$\pi_1(\CO^*_p(G))\cong N_G(S)/\langle N_G(S)\cap O^{p'}(N_G(Q))\mid1<Q\leq\Omega_1(S)\rangle.$$
It follows that
$$K(G)\cong\Hom(N_G(S)/J,k^\times)\qbox{and}J=\langle N_G(S)\cap SN_G(Q)'\mid1<Q\leq\Omega_1(S)\rangle.$$
\end{Th}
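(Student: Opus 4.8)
The plan is to begin from the description of $\pi_1(\CO^*_p(G))$ recalled in Section~\ref{sec:et}, which presents it as $G/W$, where $W$ is the normal subgroup generated by the $O^{p'}(N_G(Q))$ as $Q$ runs over all nontrivial $p$-subgroups of $G$, and to convert this into the asserted quotient of $N_G(S)$ by successive reductions. First I would note that $S\leq W$: taking $Q=\Omega_1(S)\neq1$, the hypothesis $\Omega_1(S)\leq Z(S)$ gives $S\leq C_G(Q)\leq N_G(Q)$, so the $p$-group $S$ lies in $O^{p'}(N_G(Q))\leq W$. Since $S\in\Syl_p(W)$ and $W\trianglelefteq G$, the Frattini argument yields $G=WN_G(S)$, whence $\pi_1(\CO^*_p(G))\cong G/W\cong N_G(S)/(N_G(S)\cap W)$ by the second isomorphism theorem; this step uses neither hypothesis in an essential way.

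Second, I would cut the index set down to $1<Q\leq\Omega_1(S)$. The key observation is that $\Omega_1(S)\leq Z(S)$ forces every nontrivial elementary abelian $p$-subgroup of $S$ to lie inside $\Omega_1(S)$, since any element of order $p$ is then central; hence, up to $G$-conjugacy, every such subgroup is contained in $\Omega_1(S)$. Via Quillen's equivalence $\mathcal S_p(G)^{*}\simeq\mathcal A_p(G)$ and its orbit-category refinement underlying Grodal's computation, only the subgroups $1<Q\leq\Omega_1(S)$ contribute to $W$, so $W=\langle O^{p'}(N_G(Q))\mid1<Q\leq\Omega_1(S)\rangle$.

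Third, and this is the delicate point, I would identify $N_G(S)\cap W$ with $\langle N_G(S)\cap O^{p'}(N_G(Q))\mid1<Q\leq\Omega_1(S)\rangle$. The inclusion $\supseteq$ is formal; for $\subseteq$ one must rewrite an element of $N_G(S)$ that happens to lie in the product $W$ of the $O^{p'}(N_G(Q))$ as a product of elements each lying in $N_G(S)$ and in a single $O^{p'}(N_G(Q))$. This is where I expect to use that $N_G(S)$ controls $p$-fusion together with $\Omega_1(S)\leq Z(S)$: fusion control guarantees that the $G$-conjugacies among subgroups of the central group $\Omega_1(S)$, and the $p'$-parts of their normalizers, are already visible inside $N_G(S)$, so that no mixing outside $N_G(S)$ is needed. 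Establishing this identification cleanly is the main obstacle, and I would isolate it as a lemma on the interaction of fusion control with the generation of $W$.

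Finally, for the statement about $K(G)$ I would invoke the recalled theorem of Grodal, $K(G)\cong\Hom(\pi_1(\CO^*_p(G)),k^\times)$, and simplify the right-hand side using that $\operatorname{char}k=p$ makes $k^\times$ a $p'$-torsion group. Consequently $\Hom(-,k^\times)$ annihilates every $p$-element and every commutator, so it factors through the maximal abelian $p'$-quotient; in particular a homomorphism $\chi\colon N_G(S)\to k^\times$ lying in $\Hom(\pi_1(\CO^*_p(G)),k^\times)$ is a $G$-stable family of $p'$-characters of the normalizers $N_G(Q)$, the stability coming again from fusion control together with $S\in\Syl_p(N_G(Q))$. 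For each such $Q$ one has $SN_G(Q)'=O^{p'}(N_G(Q))N_G(Q)'$, and both $O^{p'}(N_G(Q))$ and $SN_G(Q)'$ cut out the same maximal abelian $p'$-quotient of $N_G(Q)$, so a $k^\times$-valued character kills the one exactly when it kills the other. Replacing $O^{p'}(N_G(Q))$ by $SN_G(Q)'$ in the denominator therefore leaves the $\Hom$ group unchanged, giving $K(G)\cong\Hom(N_G(S)/J,k^\times)$ with $J=\langle N_G(S)\cap SN_G(Q)'\mid1<Q\leq\Omega_1(S)\rangle$.
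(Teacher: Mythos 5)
Your opening step is not correct, and the rest of the argument is built on it. The description of $\pi_1(\CO^*_p(G))$ recalled in Section~\ref{sec:et} (Theorem~\ref{thm:gr}) is \emph{not} $G/W$ with $W$ the normal closure of the subgroups $O^{p'}(N_G(Q))$: it is $N_G(S)/R$, where $R$ is the subgroup of $N_G(S)$ generated by those elements admitting a factorisation $g=g_1\cdots g_n$ with $g_i\in O^{p'}(N_G(Q_i))$ subject to the zig-zag condition $Q_0^{g_1\cdots g_i}\leq Q_{i+1}$. The two presentations genuinely differ, even under the hypotheses of the theorem: take $G=A_5$ and $p=5$, so that $S=\Omega_1(S)=Z(S)$ is cyclic of order $5$ and Burnside's theorem gives fusion control. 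The theorem (correctly) yields $\pi_1(\CO^*_5(G))\cong N_G(S)/S\cong C_2$, but your $W$ contains the normal closure of $S$ in the simple group $A_5$, so $G/W=1$. The same example shows that your ``delicate point'' is not merely delicate but false as you have set it up: there $N_G(S)\cap W=N_G(S)$ while $\langle N_G(S)\cap O^{p'}(N_G(Q))\mid 1<Q\leq\Omega_1(S)\rangle=S$. The actual content of the proof, which your proposal never supplies, is the collapse of the zig-zags: since $\Omega_1(S)\leq Z(S)$ gives $S\leq C_G(Q)$ for all $1<Q\leq\Omega_1(S)$, fusion control gives $N_G(Q)=(N_G(S)\cap N_G(Q))C_G(Q)$ and hence $O^{p'}(N_G(Q))=\langle\ls gS\mid g\in C_G(Q)\rangle\leq C_G(Q_0)$ whenever $Q_0\leq Q$; so each $g_i$ centralises $Q_0$, the chain condition forces $Q_0^{g_1\cdots g_i}=Q_0$, and the whole product lies in a single $O^{p'}(N_G(Q_0))$, whence $R=\langle N_G(S)\cap O^{p'}(N_G(Q))\mid1<Q\leq\Omega_1(S)\rangle$. (Also, the reduction of the index set to $1<Q\leq\Omega_1(S)$ comes from $N_G(Q)\leq N_G(\Omega_1(Q))$ applied to the generators of $R$, not from Quillen's poset equivalence, which says nothing directly about the orbit category.)

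For the final statement on $K(G)$, your idea is the right one, but the nontrivial inclusion is exactly the one you wave at: that a character of $N_G(S)$ vanishing on $N_G(S)'$ and on each $N_G(S)\cap O^{p'}(N_G(Q))$ also vanishes on $N_G(S)\cap SN_G(Q)'$. This is a statement about intersections with $N_G(S)$, not about the maximal abelian $p'$-quotient of $N_G(Q)$, and it requires the computation
$$N_G(S)\cap SN_G(Q)'\leq N_G(S)'\bigl(N_G(S)\cap O^{p'}(N_G(Q))\bigr),$$
which the paper obtains by applying Frattini's argument to write $N_G(Q)=(N_G(S)\cap N_G(Q))O^{p'}(N_G(Q))$, expanding $SN_G(Q)'$, and then using Dedekind's identity. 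Without that step the asserted equality $J=N_G(S)'R$, and hence $K(G)\cong\Hom(N_G(S)/J,k^\times)$, is unproved.
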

Note that Theorem~\ref{thm:main} generalises the results of \cite{CT}, since finite groups with abelian Sylow $p$-subgroups satisfy the hypotheses. 
As an immediate corollary, shown in Section~\ref{sec:et}, we obtain the description of the group of endotrivial $kG$-modules for this class of finite groups.

The above result, together with known results and some additional analysis yield our second main result.

\begin{Th}\label{thm:main2}
Let $p$ be an odd prime, let $G$ be a finite group with order divisible by $p$, let $k$ be a field of characteristic $p$ and let $S$ be a Sylow $p$-subgroup of $G$. 
Suppose that $S$ is metacyclic and let $E=\Omega_1(S)$.
The following holds.
\begin{enumerate}
\item If $S$ is cyclic, then the restriction map $\Res^G_{N_G(E)}:~T(G)\longrightarrow T(N_G(E))$ is an isomorphism, and $T(N_G(E))$ is a finite group, given by an extension
$$\xymatrix{0\ar[r]&K(N_G(E))\ar[r]&T(N_G(E))\ar[r]&\langle[\Omega(k)]\rangle\ar[r]&0},$$ 
where $K(N_G(E))\cong\Hom(N_G(E),k^\times)$.
\item If $S$ is nonabelian nonsplit and if $G$ has no proper strongly $p$-embedded subgroup, then $T(G)=\langle[\Omega(k)]\rangle\oplus K(G)$, with $K(G)\cong\Hom(G,k^\times)$ and $\langle[\Omega(k)]\rangle\cong\Z$.
\item If $S$ is split and if $G$ has no proper strongly $p$-embedded subgroup, then $T(G)=\langle[\Omega(k)]\rangle\oplus K(G)$, where $K(G)\cong\Hom(N_G(S)/R,k^\times)$ and:
\begin{enumerate}
\item if $E\leq Z(S)$, then $R=\langle N_G(S)\cap O^{p'}(N_G(P))\mid1<P\leq E\rangle$, whilst 
\item if $E\not\leq Z(S)$, then $R$ is generated by the elements of $N_G(S)\cap O^{p'}(N_G(Z))$ and those of $N_G(S)$ that can be written as a product of the form 
$g=hh't$, with with $h\in O^{p'}(N_G(E))\cap C_G(Q)$, $h'\in O^{p'}(C_G(Q))$ and $t\in S$, where $Q$ is a noncentral subgroup of $E$. 
\end{enumerate}
In particular, if $C_G(S)\leq R$, then $K(G)$ is cyclic of order dividing $p-1$.
\end{enumerate}
\end{Th}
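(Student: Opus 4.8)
My plan is to combine three ingredients, applied case by case: the known structure of $T(S)$ for a metacyclic $p$-group with $p$ odd, the fundamental exact sequence $0\to K(G)\to T(G)\xrightarrow{\Res^G_S}T(S)$, and the isomorphism $K(G)\cong\Hom(\pi_1(\CO^*_p(G)),k^\times)$ that underlies Theorem~\ref{thm:main}. Recall first that for $p$ odd a noncyclic metacyclic $S$ has $\Omega_1(S)=E\cong C_p\times C_p$, has no torsion endotrivial modules, and its poset of elementary abelian subgroups of rank at least $2$ consists of the single point $E$; hence $T(S)=\langle[\Omega(k)]\rangle\cong\Z$, whereas when $S$ is cyclic $T(S)\cong\Z/2$ is again generated by $[\Omega(k)]$. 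In each noncyclic case the problem therefore separates into determining the torsion-free rank together with a generator of $TF(G)$, and computing the finite group $K(G)=\ker\Res^G_S$.

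For part (1) I would exploit that, $S$ being cyclic, $E\cong C_p$ and $p$-fusion in $G$ is controlled by $N_G(E)\supseteq N_G(S)$ (Burnside). I would first check that the inclusion $N_G(E)\hookrightarrow G$ induces an isomorphism on $\pi_1(\CO^*_p(-))$ and that $TF$ vanishes, so $\Res^G_{N_G(E)}\colon T(G)\to T(N_G(E))$ is an isomorphism; the stated extension is then the restriction sequence for $N_G(E)$, whose image in $T(S)\cong\Z/2$ is $\langle[\Omega(k)]\rangle$ and whose kernel is $K(N_G(E))$, so $T(N_G(E))$ is finite. Finally, applying Theorem~\ref{thm:main} to $H=N_G(E)$—legitimate since its cyclic Sylow satisfies $\Omega_1(S)\leq Z(S)$ and its fusion is $N_H(S)$-controlled—and using that $E\trianglelefteq H$ is the unique nontrivial subgroup of $\Omega_1(S)$, the generating set for $J$ collapses to one term and a focal-subgroup computation identifies $N_H(S)/J$ with the largest $p'$-quotient of $H$, giving $K(N_G(E))\cong\Hom(N_G(E),k^\times)$.

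For parts (2) and (3) the torsion-free part is handled uniformly: the hypothesis that $G$ has no proper strongly $p$-embedded subgroup makes the Quillen poset $\CA_p(G)$ connected, which together with the metacyclic structure forces, via Grodal's rank computation, $TF(G)$ to have rank $1$ with generator $[\Omega(k)]$ restricting to a generator of $T(S)\cong\Z$; hence the restriction sequence splits and $T(G)=\langle[\Omega(k)]\rangle\oplus K(G)$ with $\langle[\Omega(k)]\rangle\cong\Z$. To compute $K(G)$ I would split on whether $E\leq Z(S)$. When $E\leq Z(S)$, i.e. $S\cong C_{p^a}\times C_{p^b}$ is abelian, Burnside again gives $N_G(S)$-controlled fusion and Theorem~\ref{thm:main} applies verbatim, yielding exactly formula (3)(a). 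When $E\not\leq Z(S)$ (both the nonsplit case (2) and the split case (3)(b)), $Z(S)$ is cyclic and Theorem~\ref{thm:main} no longer applies, so I would recompute $\pi_1(\CO^*_p(G))$ directly from the presentation of the orbit category, separating the central line $Z=\Omega_1(Z(S))$ from the noncentral rank-one subgroups $Q\leq E$: the generators coming from $Z$ contribute $N_G(S)\cap O^{p'}(N_G(Z))$, while each noncentral $Q$ imposes the relations coming from products $g=hh't$ with $h\in O^{p'}(N_G(E))\cap C_G(Q)$, $h'\in O^{p'}(C_G(Q))$ and $t\in S$, assembling into the subgroup $R$ of (3)(b). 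For the nonsplit case (2), recalling that $\Hom(G,k^\times)\subseteq K(G)$ always, the content is the reverse inclusion; I would establish it by showing that a nonsplit metacyclic $p$-group admits no $G$-fusion mixing the noncentral lines of $E$, so those contributions are redundant, $R$ reduces to the focal subgroup, and $K(G)\cong\Hom(G,k^\times)$.

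The hard part is precisely the case $E\not\leq Z(S)$: with Theorem~\ref{thm:main} unavailable one must carry out the $\pi_1(\CO^*_p(G))$ computation by hand, correctly bookkeeping the noncentral rank-two subgroup $E$ and the gluing relations it forces—this is where (3)(b) and the collapse in (2) live. The final assertion is then short: when $C_G(S)\leq R$ one has $SC_G(S)\leq R$, so $N_G(S)/R$ is a $p'$-quotient of $N_G(S)/SC_G(S)\hookrightarrow\Out(S)$; since the $p'$-part of $\Out(S)$ of a metacyclic $p$-group is cyclic of order dividing $p-1$, so is $N_G(S)/R$, whence $K(G)\cong\Hom(N_G(S)/R,k^\times)$ is cyclic of order dividing $p-1$. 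Secondary points needing care are the orbit-category isomorphism and vanishing of $TF$ in part (1), and the connectivity-to-rank-one step deduced from the no-strongly-$p$-embedded hypothesis.
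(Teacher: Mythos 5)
Two of your structural claims about metacyclic $p$-groups are wrong, and they derail your case division. First, you place the nonabelian nonsplit case under ``$E\not\leq Z(S)$''; in fact the opposite holds: nonsplit forces $q<n$ which, combined with the defining congruence $p^{q+l}\equiv 0\pmod{p^m}$, gives $m\leq q+l<n+l$, so by Lemma~\ref{lem:mc-rels}(2) $Z(S)$ is noncyclic and $E\leq Z(S)$. The paper's proof of part (2) is entirely different and much shorter: by Dietz's observation, a group with a nonabelian nonsplit metacyclic Sylow $p$-subgroup ($p$ odd) is $p$-nilpotent, hence $p$-soluble of $p$-rank $2$, and Theorem~\ref{thm:e-t}(6) gives $TT(G)=K(G)\cong\Hom(G,k^\times)$ at once. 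Your plan of ``showing no fusion mixes the noncentral lines of $E$'' addresses lines that do not exist, and even after correcting this you would still need to identify $N_G(S)/R$ with $G/G'S$, which is exactly where the $p$-nilpotency is used. Second, in (3a) you assert that $E\leq Z(S)$ means $S\cong C_{p^a}\times C_{p^b}$ is abelian and invoke Burnside for fusion control. This is false: $\langle x,y\rangle(p^2,p^2,1+p)$ is nonabelian of class $2$ with $E\leq Z(S)$. Theorem~\ref{thm:main} still applies, but the hypothesis that $N_G(S)$ controls $p$-fusion must come from Stancu's theorem for metacyclic Sylow subgroups at odd primes (Theorem~\ref{thm:stancu}), not from Burnside.

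Separately, the genuinely new content of the theorem --- the determination of $R$ in case (3b) --- is not actually proved in your proposal. You correctly flag it as ``the hard part'', but what you offer is a restatement of the answer (``the generators coming from $Z$ contribute $\dots$, each noncentral $Q$ imposes the relations $g=hh't$'') rather than an argument. The paper's Proposition~\ref{prop:main} does this work: it uses that $E$ and $Z$ are strongly closed in $S$, that $N_G(Q_j)=C_G(Q_j)$ for the $p$ mutually $S$-conjugate noncentral lines $Q_j\leq E$, reduces an arbitrary zig-zag $g=g_1\cdots g_n$ from Grodal's Theorem~\ref{thm:gr} to an alternating chain through $Q$ and $E$, and rewrites the resulting product in the form $hh't$ using that $O^{p'}(C_G(Q))$ is normalised by $O^{p'}(N_G(E))\cap C_G(Q)$. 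Without this, (3b) is asserted, not established. The remaining pieces of your proposal --- torsion-free rank one from the single $G$-orbit in $\CA^{\geq 2}_p(G)$, part (1) via the strongly $p$-embedded subgroup $N_G(E)$ and Frattini, and the final cyclicity statement via $\Out_G(S)$ embedding in the cyclic $p'$-part of the automorphisms of a split metacyclic $p$-group --- are essentially correct and agree with the paper.
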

The first case was shown in \cite{MT} and revisited in \cite{CT}. The second case is an immediate consequence of an observation by J. Dietz in \cite{Di} and the results of \cite{CMT,NR}. 
The assumption that $G$ does not contain a proper strongly $p$-embedded subgroup is a very mild one, see Theorem~\ref{thm:e-t}~\eqref{it:spe}.
The actual novelty is the case of a finite group with a nonabelian split metacyclic Sylow $p$-subgroup. 
Observe that if $E\leq Z(S)$, this is a corollary of Theorem~\ref{thm:main}, leaving us to consider the case when $E\not\leq Z(S)$ in order to complete the proof of Theorem~\ref{thm:main2}, which we achieve in Proposition~\ref{prop:main}. 

\vspace{.3cm}
\noindent{\bf Proposition~\ref{prop:main}.}\;{\em Let $p$ be an odd prime, let $G$ be a finite group with order divisible by $p$ and let $S\in\Syl_p(G)$. Suppose that $S$ is split metacyclic and that $G$ has no strongly $p$-embedded subgroup. Let $E=\Omega_1(S)$.}
\begin{enumerate}
\item {\em If $E\leq Z(S)$, then $\pi_1(\CO^*_p(G))\cong N_G(S)/\langle N_G(S)\cap O^{p'}(N_G(P))\mid1<P\leq E\rangle$.}
\item {\em If $E\not\leq Z(S)$, then $\pi_1(\CO^*_p(G))\cong N_G(S)/R$, where $R$ is generated by the elements of $N_G(S)\cap O^{p'}(N_G(Z))$ and those of $N_G(S)$ that can be written as a product of the form 
$g=hh't$, with with $h\in O^{p'}(N_G(E))\cap C_G(Q)$, $h'\in O^{p'}(C_G(Q))$ and $t\in S$. }
\end{enumerate}

\vspace{.3cm}

Let us emphasise that the case $p=2$ requires a different method. 
To motivate this statement, recall that if $S$ is a metacyclic $2$-group, then the elements of order $2$ generate a dihedral $2$-subgroup of $S$, nonabelian in general, whereas if $p$ is odd, then the elements of order $p$ generate an elementary abelian $p$-subgroup of $S$ which is characteristic in $S$. 
We refer to \cite{CMT2} for specific metacyclic $2$-groups (cf. also \cite{Di2}).

\vspace{0.3cm}
The paper is organised as follows. In Section~\ref{sec:et} we review the known results on endotrivial modules that we need, and prove Theorem~\ref{thm:main}. 
In Section~\ref{sec:mc}, we revisit the structure and properties of finite groups with metacyclic Sylow $p$-subgroups.
Finally, in Section~\ref{sec:et-mc}, we combine the facts of the previous two sections to complete the proofs of Proposition~\ref{prop:main} and hence Theorem~\ref{thm:main2}.

\subsection{Notation}\label{ssec:nota}
Given a group $G$ and a prime $p$, $\Syl_p(G)$ denotes the set of Sylow $p$-subgroups of $G$ and $G'=[G,G]$ the derived subgroup. 
We let $O^{p'}(G)$ be the smallest normal subgroup of $G$ of $p'$-index, that is, $O^{p'}(G)=\langle S\mid S\in\Syl_p(G)\rangle$ is the subgroup of $G$ generated by its $p$-elements.
Let $\CO^*_p(G)$ denote the orbit category on nontrivial $p$-subgroups of $G$. 
This is the category with objects the transitive $G$-sets $G/P$, where $P$ runs through the nontrivial $p$-subgroups of $G$, 
and the morphisms are given by the $G$-equivariant maps, $\xymatrix{G/P\ar[r]^{[g]}&G/Q}$, for $g\in G$ such that $P\leq\ls gQ$, where $\ls gQ=gQg^{-1}$ and $[g](xP)=xgQ$ for all $xP\in G/P$ (cf. \cite[Section 2.4]{Gr}). 
A subgroup $H$ of $G$ controls $p$-fusion in $G$ if $H$ contains a Sylow $p$-subgroup $S$ of $G$ and whenever $Q,gQg^{-1}\leq S$ for some $g\in G$, then there exist $h\in H$ and $c\in C_G(Q)$ such that $g=hc$.
A subgroup $H$ of $G$ is strongly $p$-embedded in $G$ if $p\mid |H|$ and $p\nmid|H\cap\ls gH|$ for all $g\in G-H$.
The smallest strongly $p$-embedded subgroup of $G$ containing $S\in\Syl_p(G)$ is $G_0=\langle N_G(Q)\mid1<Q\leq S\rangle$, cf. \cite[Section 46]{As}. 
If $S$ is a finite $p$-group, then $\Phi(S)=S'\langle g^p\mid g\in S\rangle$ is the Frattini subgroup and $\Omega_1(S)=\langle g\in S\mid g^p=1\rangle$. 
We write $\Hom(G,k^\times)$ for the group of $1$-dimensional $kG$-modules, and $k$ denotes both: the field and the trivial $1$-dimensional module.
Less common notation will be introduced as and when needed below. We refer to \cite{As,G,huppert} for additional background.

We end Section~\ref{sec:intro} with two well-known results in group theory which we will use liberally.
\begin{lemma}\cite[Dedekind's identity Hilfsatz I.2.12 c)]{huppert}
Let $G$ be a group and let $A,B,C\leq G$ with $A\leq C$. Then $A(B\cap C)=AB\cap C$.
\end{lemma}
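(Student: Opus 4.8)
The plan is to prove the set equality $A(B\cap C)=AB\cap C$ by establishing the two inclusions separately. Since neither side is a subgroup of $G$ in general, I would treat both as subsets of $G$ and argue elementwise, invoking the hypothesis $A\leq C$ at the two decisive moments.

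First I would prove $A(B\cap C)\subseteq AB\cap C$. A typical element of the left-hand side has the form $ax$ with $a\in A$ and $x\in B\cap C$. Since $x\in B$, we have $ax\in AB$; and since $a\in A\leq C$ and $x\in C$, closure of $C$ under multiplication gives $ax\in C$. Hence $ax\in AB\cap C$. This direction uses only that $A\leq C$ and that $C$ is a subgroup.

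For the reverse inclusion $AB\cap C\subseteq A(B\cap C)$, I would take $y\in AB\cap C$ and write $y=ab$ with $a\in A$ and $b\in B$. The key point is to show that $b\in C$, so that $b\in B\cap C$. Because $A\leq C$ we have $a^{-1}\in C$, and $y\in C$, so $b=a^{-1}y\in C$ by closure. Thus $b\in B\cap C$ and $y=ab\in A(B\cap C)$, as required.

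The only genuine subtlety is the correct deployment of the hypothesis $A\leq C$: it is exactly what allows one to shift the $A$-factor across the intersection with $C$, in one direction ensuring $ax\in C$ and in the other ensuring $a^{-1}y\in C$. No further group-theoretic input is needed, and combining the two inclusions yields the stated identity.
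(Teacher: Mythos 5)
Your proof is correct and follows the same route as the paper: the reverse inclusion is handled identically by writing $y=ab\in AB\cap C$ and observing $b=a^{-1}y\in C$ since $A\leq C$, while the forward inclusion (which the paper dismisses as clear) is simply spelled out. No issues.
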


\begin{proof}
The inclusion $\subseteq$ is clear. Conversely, let $a\in A$ and $b\in B$ such that $ab=c\in AB\cap C$. Then, $b=a^{-1}c\in C$, and so $ab\in A(B\cap C)$.
\end{proof}

\begin{lemma}\cite[Frattini's argument Theorem I.3.7]{G}\label{lem:frattini}
Let $G$ be a finite group and let $H$ be a normal subgroup of $G$ of order divisible by $p$.
Then $G=HN_G(S)$, where $S\in\Syl_p(H)$.  
\end{lemma}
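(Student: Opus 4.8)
The plan is to prove the set equality by showing that an arbitrary element $g\in G$ lies in $HN_G(S)$; since the reverse inclusion $HN_G(S)\subseteq G$ is automatic, this suffices. First I would exploit the normality of $H$: because $gHg^{-1}=H$, the conjugate $gSg^{-1}$ is again contained in $H$. As conjugation by $g$ is an automorphism of $G$, it preserves order, so $|gSg^{-1}|=|S|$, and therefore $gSg^{-1}$ is itself a Sylow $p$-subgroup of $H$.

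The key step is then an application of Sylow's conjugacy theorem \emph{inside} $H$. Since both $S$ and $gSg^{-1}$ are Sylow $p$-subgroups of $H$, there exists $h\in H$ with $h(gSg^{-1})h^{-1}=S$. Rewriting this relation as $(hg)S(hg)^{-1}=S$ shows that $hg\in N_G(S)$.

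Finally I would solve for $g$: the identity $g=h^{-1}(hg)$ exhibits $g$ as a product of $h^{-1}\in H$ and $hg\in N_G(S)$, so $g\in HN_G(S)$. As $g$ was arbitrary, we conclude $G\subseteq HN_G(S)$ and hence $G=HN_G(S)$. There is no genuine obstacle in this argument; the single place where the hypothesis is used is the normality of $H$, which is precisely what guarantees $gSg^{-1}\subseteq H$ and thereby allows Sylow's theorem to be applied within $H$ rather than merely within $G$.
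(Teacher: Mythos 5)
Your proof is correct and is essentially the same as the paper's: both use normality of $H$ to see that $\ls gS$ is a Sylow $p$-subgroup of $H$, then apply Sylow's conjugacy theorem inside $H$ to produce $h\in H$ making the relevant product lie in $N_G(S)$. The only difference is the cosmetic choice of writing $g=h^{-1}(hg)$ rather than $g=h(h^{-1}g)$.
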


\begin{proof}
For any $g\in G$, we have $\ls gS\leq H$ since $H\trianglelefteq G$. Thus, there exists $h\in H$ such that $\ls gS=\ls hS$, and it follows that $h^{-1}g\in N_G(S)$, as required.
\end{proof}

\section{Endotrivial modules}\label{sec:et}

Let $p$ be a prime, let $G$ be a finite group with order divisible by $p$, and let $k$ be a field of characteristic $p$.
All the modules we consider are finitely generated. If $M$ is a $kG$-module, we write $M^*=\Hom_k(M,k)$ for its $k$-dual and $\End_kM$ for the set of $k$-linear maps $M\to M$, both regarded as $kG$-modules. 
We write $\otimes=\otimes_k$, and we recall that $M^*\otimes M\cong\End_kM$ as $kG$-modules.
We refer to \cite{Ma} for additional background in modular representation theory.

\begin{defn}\label{def:e-t}
An {\em endotrivial $kG$-module} is a finitely generated $kG$-module $M$ such that $M^*\otimes M\cong k\oplus\proj$ as $kG$-modules, where $\proj$ denotes a projective $kG$-module. 
Equivalently, an endotrivial module is an invertible element in the stable module category of finitely generated $kG$-modules.

The {\em group of endotrivial modules} of $G$ is the set $T(G)$ of stable isomorphism classes of endotrivial $kG$-modules, with the binary operation $[M]+[N]=[M\otimes N]$. 
\end{defn}
Note that the structure of $T(G)$ depends on $k$.
Recall that two $kG$-modules $M,N$ are isomorphic in the stable module category if there exist projective $kG$-modules $U,V$ such that $M\oplus U\cong N\oplus V$.

The description of $T(G)$ invokes the $G$-orbit space $\CA^{\geq2}_p(G)/G$, defined as follows. 
The elements of the poset $\CA^{\geq2}_p(G)$ are the noncyclic elementary abelian $p$-subgroups of $G$ and the order relation is the subgroup inclusion. 
The group $G$ acts by conjugation on $\CA^{\geq2}_p(G)$, and we write $\CA^{\geq2}_p(G)/G$ for the set of $G$-orbits. 
The induced poset structure on $\CA^{\geq2}_p(G)/G$ allows us to split it into components, whereby to orbits $\{\ls GA\}$ and $\{\ls GB\}$ lie in the same component if and only we can pick representatives that are linked by a chain of inclusions and containments up to conjugation, $A\leq\ls{g_1}A_1\geq\cdots\leq\ls{g_n}A_n\geq B$, for some $A_1,\dots,A_n\in\CA^{\geq2}_p(G)$ and $g_1,\dots,g_n\in G$ (cf.\cite{Ma-poset}).

We summarise the known results about the structure of $T(G)$ in Theorem~\ref{thm:e-t}. The proofs (or references to them) can be found in \cite{CMT,Ma,NR}. 

\begin{thm}\label{thm:e-t}
Let $p$ be a prime, let $G$ be a finite group with order divisible by $p$, and let $k$ be a field of characteristic $p$. Let $S\in\Syl_p(G)$.
\begin{enumerate}
\item $T(G)$ is a finitely generated abelian group. Let us write $T(G)=TT(G)\oplus TF(G)$, where the torsion subgroup $TT(G)$ is finite and $TF(G)$ is torsionfree of finite $\Z$-rank.
\item $TT(S)=0$ unless $S$ is cyclic of order at least $3$, generalised quaternion or semidihedral (these last two cases for $p=2$ only).
\item $T(N_G(S))\cong\Hom(N_G(S),k^\times)+T(S)^{N_G(S)}$, where 
$$T(S)^{N_G(S)}=\{[M]\in T(S)\mid M\cong\ls gM\;\hbox{as $kS$-modules}\quad\forall\;g\in N_G(S)\}$$
is the group of $N_G(S)$-stable endotrivial $kS$-modules.
\item\label{it:spe} The restriction $\Res^G_{N_G(S)}:T(G)\to T(N_G(S))$ is injective, and $\Res^G_H:T(G)\to T(H)$ is an isomorphism if $H$ is a strongly $p$-embedded subgroup of $G$. 
\item $\ker(\Res^G_S:T(G)\to T(S))\cong\Hom(G,k^\times)$ whenever $S\cap\ls gS$ is nontrivial for all $g\in G$.
\item If $G$ is $p$-soluble (cf. \cite[Chapter 6]{G}) of $p$-rank at least $2$, then $TT(G)\cong\Hom(G,k^\times)$.
\item The rank of $TF(G)\cong T(G)/TT(G)$ as abelian group is equal to the number of connected components of the $G$-orbit space $\CA^{\geq2}_p(G)/G$. 
Moreover, if $\CA^{\geq2}_p(G)/G$ is connected, then $T(G)=TT(G)\oplus\langle[\Omega(k)]\rangle$, where $\Omega(k)$ is the kernel of a projective presentation of the trivial module $k$.
\end{enumerate}
\end{thm}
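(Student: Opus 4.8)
The plan is to assemble the seven assertions from the established theory of endotrivial modules, indicating for each the machinery to invoke and how the statements interlock; the proofs themselves live in \cite{CMT,Ma,NR}, and I sketch the route. The backbone is part (1): that $T(G)$ is a finitely generated abelian group under tensor product, with identity $[k]$ and inverses $[M^*]=-[M]$. Granting finite generation, the splitting $T(G)=TT(G)\oplus TF(G)$ into a finite torsion subgroup and a free part of finite $\Z$-rank is merely the structure theorem for finitely generated abelian groups, so the work is concentrated in finite generation, which I would cite from the foundational references.

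For parts (2) and (3) I would pass to the local situation. Part (2) is the Carlson--Th\'evenaz classification of $T(S)$ over a $p$-group $S$: the group $T(S)$ is torsionfree outside the cyclic, generalised quaternion and semidihedral families, and one records the finite cyclic torsion in each exceptional case (see \cite{Ma}). Part (3) I would deduce for $N:=N_G(S)$ from the normality $S\trianglelefteq N$. The kernel of $\Res^N_S$ is $\Hom(N,k^\times)$ by part (5) applied to $N$, which is legitimate since $S\cap\ls gS=S\neq1$ for every $g\in N$; the image of $\Res^N_S$ lies in the $N$-stable classes $T(S)^{N}$ because any $kN$-module is carried to itself under conjugation by elements of $N$, and surjectivity onto $T(S)^{N}$ is the one point to verify, yielding the stated sum.

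Parts (4) and (5) are the fusion-theoretic inputs. For the injectivity of $\Res^G_{N}$ in part (4), I would argue that a class killed by $\Res^G_{N}$ is in particular killed by $\Res^G_S$, so the module restricts to $k\oplus\proj$ on $S$, and a Green-correspondence analysis of vertices and sources then forces it to be stably trivial; the isomorphism $\Res^G_H$ for a strongly $p$-embedded $H$ follows because such an $H$ carries all of the $p$-local structure of $G$, so restriction is an isomorphism on the relevant stable categories. Part (5) identifies $\ker(\Res^G_S)$, the group $K(G)$ featuring in Theorem~\ref{thm:main}, with $\Hom(G,k^\times)$: the hypothesis $S\cap\ls gS\neq1$ for all $g\in G$ excludes the trivial-intersection obstruction, so every class trivial on $S$ arises from a one-dimensional module.

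Finally, parts (6) and (7) concern torsion and rank. Part (6), that $TT(G)\cong\Hom(G,k^\times)$ for $p$-soluble $G$ of $p$-rank at least $2$, I would take from the $p$-soluble analysis of \cite{NR}. Part (7) is the deepest ingredient and the main obstacle: identifying the rank of $TF(G)$ with the number of connected components of the orbit space $\CA^{\geq2}_p(G)/G$ requires the homotopy-theoretic computation of \cite{Gr}, in which each component contributes one infinite-cyclic summand generated by a relative syzygy, so that connectedness forces $TF(G)=\langle[\Omega(k)]\rangle$. I expect this topological identification, rather than the group-theoretic parts, to be the crux; once it and finite generation are in hand, assembling the summands into the stated direct-sum decompositions is routine bookkeeping.
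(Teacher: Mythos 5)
Your proposal takes essentially the same approach as the paper, which offers no proof of this theorem at all but simply states that the proofs, or references to them, are in \cite{CMT,Ma,NR}; your assembly of the standard arguments (the structure theorem for (1), the Carlson--Th\'evenaz $p$-group classification for (2), reduction of (3) to (5) plus stable extension, Green correspondence between $G$ and $N_G(S)$ resp.\ a strongly $p$-embedded $H$ for (4), the identification of $\ker(\Res^G_S)$ with one-dimensional modules for (5), and \cite{NR} for (6)) is sound and consistent with those sources. One caveat: you misattribute part (7) --- the equality of the rank of $TF(G)$ with the number of components of $\CA^{\geq2}_p(G)/G$ is not Grodal's homotopy-theoretic computation but goes back to Alperin (for $p$-groups) and Carlson--Mazza--Nakano, proved by module-theoretic and cohomological methods and surveyed in \cite{Ma}, whereas \cite{Gr} computes the trivial-source torsion part $K(G)$; moreover your claim that each component contributes a summand ``generated by a relative syzygy'' overstates what is known in general, since the paper itself lists finding generators of $TF(G)$ as one of the two open obstructions --- neither slip affects the truth of the assembled statement.
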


As a corollary of Theorem~\ref{thm:e-t}, we obtain that if $S$ is neither cyclic of order at least $3$, generalised quaternion or semidihedral, then 
$$TT(G)=K(G)=\ker(\Res^G_S:T(G)\to T(S)).$$
That is, $TT(G)$ is generated by the stable isomorphism classes of the trivial source endotrivial $kG$-modules, i.e. the indecomposable endotrivial $kG$-modules $M$ such that $\Res^G_SM\cong k\oplus\proj$.

\vspace{.3cm}
The group of endotrivial modules of a finite group $G$ has been determined for several families of groups.
In this article we revisit some of these indirectly, since finite groups with metacyclic Sylow $p$-subgroups comprise those with cyclic Sylow $p$-subgroups \cite[Theorem 3.6]{MT}, those with dihedral, semidihedral or quaternion Sylow $2$-subgroups \cite{CMT2}, and in some cases such groups are $p$-soluble \cite[Theorem 6.2]{CMT} and \cite{NR}. 
The main obstructions to determine $T(G)$ in general are:
\begin{itemize}
\item[(i)] Determine $K(G)$. 
\item[(ii)] Find generators for a torsionfree subgroup $TF(G)$ if $T(G)/TT(G)$ is not cyclic.
\end{itemize} 

\vspace{.3cm}
The study of endotrivial modules has inspired a variety of methods, mostly concerned with the study of $K(G)$. 
Indeed, the investigations around trivial source endotrivial modules lead to surprising connections with other areas of mathematics.

In \cite{B}, Balmer shows that $K(G)$ is isomorphic to the group of weak $S$-homomorphisms of $G$ (over $k$), where $S\in\Syl_p(G)$. A {\em weak $S$-homomorphism of $G$} is a function $\chi:G\to k^\times$ such that:
\begin{itemize}
\item $\chi(g)=1$ for all $g\in S$ and for all $g\in G$ such that $S\cap\ls gS=1$.
\item For any $g,h\in G$ such that $S\cap\ls gS\cap\ls{gh}S\neq1$, then $\chi(gh)=\chi(g)\chi(h)$.
\end{itemize}
Multiplication of two weak $S$-homomorphisms is pointwise. 
Using Balmer's work, Carlson and Th\'evenaz show in \cite[Theorem 5.1]{CT} that if $S$ is abelian, then the group of weak $S$-homomorphisms of $G$ is isomorphic to $\Hom(N_G(S)/\rho^2(S),k^\times)$, 
where $\rho^2(S)=\langle N_G(S)\cap\rho^1(P)\mid\;1<P\leq S\rangle$. 
More generally, they associate to every nontrivial subgroup $P$ of $S$ an ascending sequence of subgroups $(\rho^i(P))_{i\geq1}$ of $N_G(P)$ defined inductively as follows~\cite[Section 4]{CT}.
\begin{itemize}
\item $\rho^1(P)=S_PN_G(P)'$, where $S_P\in\Syl_p(N_G(P))$.
\item $\rho^{i+1}(P)=\langle N_G(P)\cap\rho^i(Q)\mid1<Q\leq S\rangle$ for all $i\geq1$.
\end{itemize}

In \cite{CT}, the authors hint at some properties of these subgroups, which we spell out.
\begin{rem}\label{rem:rho-sgps}
\begin{enumerate}
\item Since $\Omega_1(P)\car P$ for any $p$-group $P$, we have $\rho^1(Q)\leq\rho^1(\Omega_1(Q))$ for all $Q\leq S$, and, arguing inductively on $i\geq1$, 
we observe that 
$$\rho^i(Q)=\langle N_G(Q)\cap\rho^{i-1}(P)\mid1<P\leq S\rangle=\langle N_G(Q)\cap\rho^{i-1}(P)\mid1<P\leq\Omega_1(S)\rangle$$ 
for any nontrivial subgroup $Q$ of $S$. Hence $\rho^\infty(S)$ is attained using the nontrivial subgroups of $\Omega_1(S)$.
\item If there exists a positive integer $i$ such that $\rho^i(Q)=\rho^{i+1}(Q)$, for all $1<Q\leq\Omega_1(S)$, then $\rho^i(P)=\rho^\infty(P)$ for all $1<P\leq S$. 
\end{enumerate}
\end{rem}

A more recent article of Grodal uses homotopy theory to identify $K(G)$ as the abelianisation of the fundamental group of the orbit category on nontrivial $p$-subgroups of $G$. 
In \cite[Addendum 3.4]{Gr}, Grodal relates his result with Balmer's and in \cite[Theorem F and Section 5]{Gr}, he shows the Carlson-Th\'evenaz conjecture asserting that $K(G)\cong N_G(S)/\rho^\infty(S)$.

\begin{thm}\cite[Theorems A and 4.10]{Gr}\label{thm:gr}
Let $\CO^*_p(G)$ denote the orbit category on nontrivial $p$-subgroups of $G$.
Then $K(G)$ is isomorphic to the group of isomorphism classes of functors $\CO^*_p(G)\longrightarrow k^\times$, where $k^\times$ is regarded as a category with one object. 
Equivalently, in terms of group theory, let $R$ denote the subgroup of $N_G(S)$ generated by all the elements $g$ such that there exist nontrivial subgroups $Q_0,\dots,Q_n\leq S$ and elements $g_i\in O^{p'}(N_G(Q_i))$ for all $1\leq i\leq n$ with
\begin{itemize}
\item $g=g_1\cdots g_n$, and 
\item $Q_0^{g_1\cdots g_i}\leq Q_{i+1}$, for all $1\leq i\leq n-1$ (the initial condition being $Q_0\leq Q_1$).
\end{itemize}
Then $\pi_1(\CO^*_p(G))\cong N_G(S)/R$ and thus $K(G)\cong\Hom(N_G(S)/R,k^\times)$.
\end{thm}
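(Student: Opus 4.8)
The first assertion is Grodal's Theorem~A, which identifies $K(G)$ with the isomorphism classes of functors $\CO^*_p(G)\to k^\times$; so the real content is the group-theoretic computation of $\pi_1(\CO^*_p(G))$, and the closing sentence is then formal. Indeed, viewing $k^\times$ as a one-object category, a functor $\CO^*_p(G)\to k^\times$ is a homomorphism from the fundamental group of the nerve into $k^\times$, and since $k^\times$ is abelian two such functors are isomorphic exactly when they agree; hence the isomorphism classes form $\Hom(\pi_1(\CO^*_p(G)),k^\times)$, which becomes $\Hom(N_G(S)/R,k^\times)$ the moment we know $\pi_1(\CO^*_p(G))\cong N_G(S)/R$. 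I would therefore focus solely on this isomorphism.

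To compute $\pi_1$ I would pass to the fundamental groupoid of the nerve with basepoint $G/S$. The category is connected, since every nontrivial $p$-subgroup is subconjugate to $S$, so every object maps to $G/S$; moreover every object is isomorphic to some $G/P$ with $1<P\leq S$, so I may replace $\CO^*_p(G)$ by the equivalent full subcategory on these objects. For each such $P$ the inclusion $\iota_P=[1]\colon G/P\to G/S$ is a morphism, and the family $\{\iota_P\}$ forms a spanning tree centred at $G/S$; transporting an arbitrary morphism $f\colon G/P\to G/Q$ along it gives a based loop $\iota_Q\,f\,\iota_P^{-1}$ at $G/S$. The automorphisms $[g]\colon G/S\to G/S$ for $g\in N_G(S)$ assemble into a homomorphism $\psi\colon N_G(S)\to\pi_1(\CO^*_p(G),G/S)$ which factors through $\Aut(G/S)=N_G(S)/S$, and the theorem is the statement that $\psi$ is surjective with kernel $R$.

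The mechanism that explains the appearance of $O^{p'}(N_G(Q))$, and which I regard as the heart of the matter, is this: let $Q\leq S$ and let $x\in N_G(Q)$ be a $p$-element. Then $P:=\langle Q,x\rangle=Q\langle x\rangle$ is a $p$-group with $Q\leq P$ and $x\in P$. Writing $\iota=[1]\colon G/Q\to G/P$ for the inclusion and $[x]_Q$ for the automorphism of $G/Q$, we have $\iota\circ[x]_Q=[x]\colon G/Q\to G/P$, and this equals $\iota$ because $x\in P$ forces $[x]=[1]$ as morphisms $G/Q\to G/P$; hence $\iota\circ[x]_Q=\iota$ and $[x]_Q$ is trivial in the fundamental groupoid. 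As $O^{p'}(N_G(Q))$ is generated by the $p$-elements of $N_G(Q)$, every element of $O^{p'}(N_G(Q))$ gives a null loop at $G/Q$; composing such loops along inclusions and conjugations produces exactly the products $g=g_1\cdots g_n$ with $g_i\in O^{p'}(N_G(Q_i))$ subject to the transport condition $Q_0^{g_1\cdots g_i}\leq Q_{i+1}$, this condition being precisely what renders the corresponding zig-zag composable in $\CO^*_p(G)$. Thus every generator of $R$ lies in $\ker\psi$, and $\psi$ descends to a homomorphism $N_G(S)/R\to\pi_1(\CO^*_p(G),G/S)$.

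The main obstacle is to prove that this descended map is an isomorphism, that is, that $\psi$ is \emph{onto} and that $\ker\psi$ is \emph{no larger} than $R$. Surjectivity requires expressing every based loop --- in particular those coming from automorphisms $[g]$ of objects $G/Q$ with $g\notin N_G(S)$ --- in terms of $\Aut(G/S)$ by factoring $[g]$ through inclusions and conjugation isomorphisms and cancelling the now-trivial $p$-element contributions; the exactness of the kernel requires showing that the only relations among the based loops are consequences of the inclusion relations, the composition ($2$-simplex) relations, and the $O^{p'}$-triviality relations above, so that no loop dies for an accidental reason. This completeness is the delicate combinatorial bookkeeping carried out in Grodal's Theorem~4.10, which I would organise by filtering $\CO^*_p(G)$ along the poset of $p$-subgroups (equivalently, working through the transporter category) and analysing the contribution of each conjugacy class of subgroups in turn. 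Granting it, $\psi$ induces $\pi_1(\CO^*_p(G))\cong N_G(S)/R$, and applying $\Hom(-,k^\times)$ gives $K(G)\cong\Hom(N_G(S)/R,k^\times)$, as claimed.
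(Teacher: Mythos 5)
The paper does not prove this statement at all: it is quoted from Grodal \cite[Theorems A and 4.10]{Gr} and used as a black box, so there is no internal proof to compare yours against. Judged on its own terms, your write-up correctly locates where the content lies and gets the key mechanism right: for a $p$-element $x\in N_G(Q)$, the equality $[x]=[1]$ as morphisms $G/Q\to G/(Q\langle x\rangle)$ forces the automorphism $[x]$ of $G/Q$ to become trivial in the fundamental groupoid (inclusions are invertible there), which is exactly why $O^{p'}(N_G(Q))$ enters; and the transport condition $Q_0^{g_1\cdots g_i}\leq Q_{i+1}$ is indeed just composability of the zig-zag, as in the diagram~\eqref{eq:gr}. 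This shows $R\subseteq\ker\psi$.

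As a proof, however, it is incomplete at precisely the point where the theorem has content. The two hard assertions --- that $\psi\colon N_G(S)\to\pi_1(\CO^*_p(G),G/S)$ is surjective, and that $\ker\psi$ is no larger than $R$ --- are named and then delegated to ``the delicate combinatorial bookkeeping carried out in Grodal's Theorem~4.10''; since Theorem~4.10 is the statement to be proved, that step is circular as written. A self-contained argument would have to take the spanning-tree presentation of $\pi_1$ of the nerve and verify that every edge-path relation is a consequence of the $2$-simplex relations together with the $O^{p'}$-relations, which is the actual work in \cite{Gr}. One smaller slip: two functors $F,G\colon\CO^*_p(G)\to k^\times$ are isomorphic when they differ by a coboundary $G(f)=\eta_Y F(f)\eta_X^{-1}$, not only when they agree; the conclusion that the isomorphism classes form $\Hom(\pi_1(\CO^*_p(G)),k^\times)$ is still correct (it is $H^1$ of the nerve), but not for the reason you give.
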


Theorem~\ref{thm:gr} can be visualised as follows:  
\begin{equation}\label{eq:gr}
\xymatrix{&Q_1\ar@{-}[dl]\ar@{-}[d]&\cdots&Q_{n-1}\ar@{-}[dl]\ar@{-}[d]&Q_n\ar@{-}[dl]\ar@{-}[d]\\
Q_0\ar@/_/[r]_{g_1}&Q_0^{g_1}\ar@/_/[r]_{g_2}&\cdots&Q_0^{g_1\cdots g_{n-1}}\ar@/_/[r]_{g_n}&Q_0^{g_1\cdots g_n}}
\end{equation}
where the arrows mean right conjugation and the undirected edges mean that what is above contains what is below. 

All is now in place to prove Theorem~\ref{thm:main}.

\vspace{.3cm}
\noindent{\bf Theorem~\ref{thm:main}.}
{\em Let $p$ be a prime, let $G$ be a finite group with order divisible by $p$ and let $S$ be a Sylow $p$-subgroup of $G$.
Suppose that $\Omega_1(S)\leq Z(S)$ and that $N_G(S)$ controls $p$-fusion in $G$. 
Then
$$\pi_1(\CO^*_p(G))\cong N_G(S)/\langle N_G(S)\cap O^{p'}(N_G(Q))\mid1<Q\leq\Omega_1(S)\rangle.$$
It follows that
$$K(G)\cong\Hom(N_G(S)/J,k^\times),\qbox{with}J=\langle N_G(S)\cap SN_G(Q)'\mid1<Q\leq\Omega_1(S)\rangle.$$}

\vspace{.3cm}
We know that the result holds if $S$ is abelian.
Moreover, if $S$ is cyclic, then $H=N_G(\Omega_1(S))$ is strongly $p$-embedded in $G$ and $\pi_1(\CO^*_p(G))\cong\pi_1(\CO^*_p(H))\cong H/O^{p'}(H)$. 
By Frattini's argument, $H=N_H(S)O^{p'}(H)$, which implies that $H/O^{p'}(H)\cong N_H(S)/(N_H(S)\cap O^{p'}(H))$.
Moreover, the abelianisation of $H/O^{p'}(H)$ is $H/H'O^{p'}(H)=H/H'S$, the largest abelian $p'$-quotient of $H$.

\begin{proof}
For convenience, let $N=N_G(S)$, and, for every $1<Q<S$, let $N_Q=N_G(Q)$, let $C_Q=C_G(Q)$, and choose $S_Q\in\Syl_p(N_Q)$. 

We have
$\langle N\cap O^{p'}(N_Q)\mid1<Q\leq S\rangle=\langle N\cap O^{p'}(N_Q)\mid1<Q\leq\Omega_1(S)\rangle$ and similarly 
$\langle N\cap S_QN_Q'\mid1<Q\leq S\rangle=\langle N\cap SN_Q'\mid1<Q\leq\Omega_1(S)\rangle$.
Consequently, we can choose the subgroups $Q_0,\dots,Q_n$ of Theorem~\ref{thm:gr} in $\Omega_1(S)$.
Let $R$ be the subgroup defined in Theorem~\ref{thm:gr}.
We want to show that for every $g\in R$ there exists a nontrivial subgroup $Q$ of $\Omega_1(S)$ such that $g\in O^{p'}(N_Q)$.
Let $g\in R$, and say that we can factorise $g=g_1\cdots g_n$ with $g_i\in O^{p'}(N_{Q_i})$ such that $Q_0,\dots,Q_n$ are subject to the conditions of Theorem~\ref{thm:gr}.
In particular, $Q_0,Q_0^{g_1}\leq Q_1$ with $g_1\in O^{p'}(N_{Q_1})$. 

Since $N$ controls $p$-fusion in $G$, we have $N_Q=(N\cap N_Q)C_Q$, with $O^{p'}(N_Q)=\langle\ls gS\mid g\in C_Q\rangle$, for all $1<Q\leq\Omega_1(S)$, since $\Omega_1(S)\leq Z(S)$.
Note that if $1<Q_0\leq Q_1\leq\Omega_1(S)$, then $C_{Q_1}\leq C_{Q_0}$ and so $O^{p'}(N_{Q_1})\leq O^{p'}(N_{Q_0})$.
We conclude that $Q_0=Q_0^{g_1}$. Proceeding inductively, we obtain $g_1\cdots g_i\in O^{p'}(N_{Q_0})$, for all $1\leq i\leq n-1$, and so $g\in O^{p'}(N_{Q_0})$, for some $1<Q_0\leq\Omega_1(S)$.
This holds for all $g\in R$, proving that $R=\langle N\cap O^{p'}(N_Q)\mid1<Q\leq\Omega_1(S)\rangle$. 
By Theorem~\ref{thm:gr}, we have $\pi_1(\CO^*_p(G))\cong N/R=N/\langle N\cap O^{p'}(N_Q)\mid1<Q\leq\Omega_1(S)\rangle$.

It remains to show that the abelianisation $(N/R)/[N/R,N/R]=N/N'R$ of $N/R$ is isomorphic to $N/J$, i.e. that $N'R=J$, where $J=\langle N_G(S)\cap SN_G(Q)'\mid1<Q\leq\Omega_1(S)\rangle$ (this should be well known, but we were unable to find a reference). 

For all $1<Q<S$, we have $O^{p'}(N_Q)\leq S_QN_Q'$, from which we deduce the inclusion $R\leq J$.
Since $N/J$ is abelian we have $N'\leq J$, and thus $N'R\leq J$.

Conversely, by Frattini's argument, we can write $N_Q=(N\cap N_Q)O^{p'}(N_Q)$, for all $1<Q\trianglelefteq S$.
Then we calculate, for all $1<Q\leq\Omega_1(S)$, 
$$SN_Q'=S(N\cap N_Q)'O^{p'}(N_Q)'[O^{p'}(N_Q),N\cap N_Q]\leq(N\cap N_Q)'O^{p'}(N_Q)$$
(using that $N\cap N_Q$ normalises $O^{p'}(N_Q)$). 
Hence, for all $1<Q\leq\Omega_1(S)$,
$$N\cap SN_Q'\leq N\cap(N\cap N_Q)'O^{p'}(N_Q)=(N\cap N_Q)'\big(N\cap O^{p'}(N_Q)\big),$$
where we have applied Dedekind's identity. 
Therefore, $N\cap SN_Q'\leq N'\big(N\cap O^{p'}(N_Q)\big)$.
Finally, note that $N'R=\langle N'\big(N\cap O^{p'}(N_Q)\big)\mid1<Q\leq\Omega_1(S)\rangle$, because $N'\trianglelefteq N$ and $R$ is generated by subgroups of $N$.
The equality $J=N'R$ follows.
\end{proof}

For any finite group $G$, the poset $\CA^{\geq2}_p(G)/G$ of the $G$-orbits of noncyclic elementary abelian $p$-subgroups of $G$ is connected whenever $Z(S)$ is noncyclic. 
Therefore, the difficulty in determining the group of endotrivial modules $T(G)$ over a group $G$ satisfying the hypothesis of Theorem~\ref{thm:main} is to determine the subgroup $K(G)$ of $T(G)$ generated by the trivial source endotrivial modules. 
Indeed, let $S\in\Syl_p(G)$ and suppose that $\Omega_1(S)\leq Z(S)$. If $\Omega_1(S)$ is cyclic, then $S$ is cyclic, or possibly $p=2$ and $S$ is generalised quaternion (cf. \cite[Satz III.8.2]{huppert}). 
In this case, $\Res^G_{G_0}:T(G)\to T(G_0)$ is an isomorphism, where $G_0=N_G(\Omega_1(S))$ is strongly $p$-embedded in $G$. 
Furthermore, $T(G_0)$ is a finite group, described by an extension $\xymatrix{0\ar[r]&\Hom(G_0,k^\times)\ar[r]&T(G_0)\ar[r]&T(S)\ar[r]&0}$, see \cite[Theorem 3.2]{MT} and \cite[Lemma 4.1 and Theorem 4.5]{CMT2}. 
It remains to consider the case when $\Omega_1(S)\leq Z(S)$ is not cyclic, knowing that $T(G)=\langle[\Omega(k)]\rangle\oplus K(G)$ and $\langle[\Omega(k)]\rangle\cong\Z$. 

\begin{cor}\label{cor:main}
Let $p$ be a prime, let $G$ be a finite group with order divisible by $p$ and let $S$ be a Sylow $p$-subgroup of $G$.
Suppose that $\Omega_1(S)\leq Z(S)$ is not cyclic and that $N_G(S)$ controls $p$-fusion in $G$. 
Then $T(G)=\langle[\Omega(k)]\rangle\oplus K(G)$, with $\langle[\Omega(k)]\rangle\cong\Z$ and $K(G)\cong\Hom(N_G(S)/J,k^\times)$, where $J=\langle N_G(S)\cap SN_G(Q)'\mid1<Q\leq\Omega_1(S)\rangle$.
\end{cor}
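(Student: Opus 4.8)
The plan is to derive Corollary~\ref{cor:main} from Theorem~\ref{thm:main} together with the structural facts collected in Theorem~\ref{thm:e-t}. The hypotheses of Corollary~\ref{cor:main} are exactly those of Theorem~\ref{thm:main} with the extra assumption that $\Omega_1(S)\leq Z(S)$ is noncyclic, so the computation of $K(G)$ is immediate: Theorem~\ref{thm:main} gives $K(G)\cong\Hom(N_G(S)/J,k^\times)$ with the stated $J$. The only genuine work is to explain why $T(G)$ splits as $\langle[\Omega(k)]\rangle\oplus K(G)$ with the free summand of rank exactly one.

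First I would establish that the torsion subgroup $TT(G)$ equals $K(G)$. By hypothesis $\Omega_1(S)\leq Z(S)$ is noncyclic, so $S$ itself is noncyclic and is neither cyclic, generalised quaternion, nor semidihedral (the latter two have cyclic $\Omega_1$ for $p=2$). Hence the corollary to Theorem~\ref{thm:e-t} stated in the text applies and yields $TT(G)=K(G)=\ker(\Res^G_S)$. This identifies the torsion part with the group already computed.

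Next I would pin down the torsionfree part $TF(G)\cong T(G)/TT(G)$. By Theorem~\ref{thm:e-t}\eqref{it:spe} its rank equals the number of connected components of the orbit space $\CA^{\geq2}_p(G)/G$. As remarked immediately before the corollary, $\CA^{\geq2}_p(G)/G$ is connected whenever $Z(S)$ is noncyclic; since $\Omega_1(S)\leq Z(S)$ is noncyclic, $Z(S)$ is noncyclic, so the orbit space is connected and $TF(G)$ has rank one. The last clause of Theorem~\ref{thm:e-t} then gives the splitting $T(G)=TT(G)\oplus\langle[\Omega(k)]\rangle$ with $\langle[\Omega(k)]\rangle\cong\Z$. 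Combining the two steps, $T(G)=\langle[\Omega(k)]\rangle\oplus K(G)$ with $K(G)\cong\Hom(N_G(S)/J,k^\times)$ as claimed.

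I do not expect a serious obstacle here, since the statement is essentially an assembly of earlier results; the one point that needs care is verifying that $S$ falls outside the exceptional list so that $TT(G)=K(G)$, and this follows cleanly from the noncyclicity of $\Omega_1(S)$. The description of $K(G)$ itself is quoted verbatim from Theorem~\ref{thm:main}, so no additional fusion-theoretic argument is required.
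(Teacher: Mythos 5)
Your argument is correct and is essentially the paper's own (given in the paragraph preceding the corollary): noncyclicity of $\Omega_1(S)\leq Z(S)$ forces $Z(S)$ noncyclic, hence $\CA^{\geq2}_p(G)/G$ is connected and $T(G)=TT(G)\oplus\langle[\Omega(k)]\rangle$ with $TT(G)=K(G)$ because $S$ is neither cyclic, generalised quaternion nor semidihedral, and $K(G)$ is then read off from Theorem~\ref{thm:main}. Two cosmetic points: the rank statement you invoke is item~(7) of Theorem~\ref{thm:e-t}, not the item labelled \eqref{it:spe}; and semidihedral $2$-groups are excluded because their $\Omega_1$ is noncentral (it is dihedral), not cyclic --- but neither affects the validity of the argument.
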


\vspace{.3cm}
We end this section with the explicit construction of weak homomorphisms for a group satisfying the hypothesis of Theorem~\ref{thm:main}. 
This is similar to \cite[Theorems 5.1 and 7.1]{CT}, and somewhat implicit in that article.

\begin{prop}\label{prop:omega1central}
Let $p$ be a prime, let $G$ be a finite group with order divisible by $p$ and let $S\in\Syl_p(G)$.
Suppose that $\Omega_1(S)\leq Z(S)$ and that $N_G(S)$ controls $p$-fusion in $G$. 
Then, the group of weak $S$-homomorphisms of $G$ is isomorphic to the subgroup of $\Hom(N_G(S),k^\times)$ generated by the maps containing $\rho^2(S)$ in their kernel.
\end{prop}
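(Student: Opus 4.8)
The plan is to exhibit the claimed isomorphism explicitly, via a map $f\mapsto\chi_f$ sending a homomorphism $f\colon N_G(S)\to k^\times$ with $\rho^2(S)\leq\ker f$ to a weak $S$-homomorphism $\chi_f$ of $G$. Write $N=N_G(S)$ and $E=\Omega_1(S)$. First a formal remark: since $k^\times$ is abelian, every $f\in\Hom(N,k^\times)$ kills $N'$, and in fact $N'\leq\rho^2(S)$ (because $E\car S$ gives $N\leq N_G(E)$, so $N'\leq N_G(E)'\leq SN_G(E)'=\rho^1(E)$, whence $N'\leq N\cap\rho^1(E)\leq\rho^2(S)$). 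Thus $\rho^2(S)\trianglelefteq N$ and the maps of $\Hom(N,k^\times)$ whose kernel contains $\rho^2(S)$ already form the subgroup $\Hom(N/\rho^2(S),k^\times)$. The decisive identity is that, under our hypotheses, $\rho^2(S)=J$: indeed, for every $1<P\leq E\leq Z(S)$ we have $S\leq C_G(P)\leq N_G(P)$, so $S\in\Syl_p(N_G(P))$ and $\rho^1(P)=SN_G(P)'$; substituting this into $\rho^2(S)=\langle N\cap\rho^1(P)\mid 1<P\leq E\rangle$ (Remark~\ref{rem:rho-sgps}) gives exactly $J$. Hence the target subgroup equals $\Hom(N/J,k^\times)$, which by Theorem~\ref{thm:main} and Balmer's theorem is abstractly isomorphic to the group of weak $S$-homomorphisms; the remaining work is to realise this isomorphism by an explicit formula.

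For the construction, set $\chi_f(g)=1$ whenever $S\cap\ls gS=1$. Otherwise I choose a nontrivial $P\leq E$ with $P\leq S$ and $\ls{g}{P}\leq S$ — for instance $P=\ls{g^{-1}}{\Omega_1(S\cap\ls gS)}$, which lies in $E$ since it is generated by elements of order $p$ inside $S$ — and, using that $N$ controls fusion together with Frattini's argument (Lemma~\ref{lem:frattini}) applied to $C_G(P)$, I write $g=hc$ with $h\in N$ and $c\in O^{p'}(C_G(P))=O^{p'}(N_G(P))$, and set $\chi_f(g)=f(h)$. This is well defined: two such factorisations $g=hc=h'c'$ satisfy $(h')^{-1}h=c'c^{-1}\in N\cap O^{p'}(N_G(P))\leq R\leq J=\rho^2(S)\leq\ker f$, so $f(h)=f(h')$. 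Independence of the choice of $P$ follows by monotonicity: two valid choices $P_1,P_2$ are both dominated by $P_1P_2$, which again lies in $E\leq Z(S)$ and remains valid because $\ls{g}{(P_1P_2)}=\ls{g}{P_1}\cdot\ls{g}{P_2}\leq Z(S)$, while enlarging $P$ shrinks $O^{p'}(C_G(P))$ and so leaves $f(h)$ unchanged.

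It remains to check the weak-homomorphism axioms, injectivity, and bijectivity. The first axiom is immediate: for $g\in S$ one takes $P=E$ and the factorisation $g=g\cdot 1$, so $\chi_f(g)=f(g)=1$ since $S\leq\rho^2(S)\leq\ker f$, while $\chi_f$ vanishes off its support by fiat. The main obstacle is multiplicativity. Given $g,h\in G$ with $W=\Omega_1(S\cap\ls gS\cap\ls{gh}S)\neq 1$, I would put $a=\ls{g^{-1}}{W}$ and $b=\ls{h^{-1}}{a}=\ls{(gh)^{-1}}{W}$ — nontrivial subgroups of $E$ — and take factorisations $g=h_gc_g$ and $h=h_hc_h$ with $c_g\in O^{p'}(C_G(a))$ and $c_h\in O^{p'}(C_G(b))$, so that $\chi_f(g)=f(h_g)$ and $\chi_f(h)=f(h_h)$ by the independence just proved. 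The heart of the matter is the bookkeeping: from $h=h_hc_h$ with $c_h\in C_G(b)$ one obtains $\ls{h_h^{-1}}{a}=b$, hence $\ls{h_h^{-1}}{c_g}\in O^{p'}(C_G(b))$, so that $gh=(h_gh_h)\bigl(\ls{h_h^{-1}}{c_g}\cdot c_h\bigr)$ is a valid factorisation of $gh$ at $b$; well-definedness then gives $\chi_f(gh)=f(h_gh_h)=\chi_f(g)\chi_f(h)$. Finally $f\mapsto\chi_f$ is a group homomorphism (the element $h$ is selected independently of $f$) satisfying $\chi_f|_N=f$, so it is injective; as its target, the group of weak $S$-homomorphisms, is finite of order $|K(G)|=|\Hom(N/J,k^\times)|=|\Hom(N/\rho^2(S),k^\times)|$ — the order of the source — injectivity forces bijectivity. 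I expect multiplicativity to be the only genuinely delicate step, since it is there that the centrality $\Omega_1(S)\leq Z(S)$ and the inclusion $N\cap O^{p'}(N_G(P))\leq\ker f$ must be played against one another.
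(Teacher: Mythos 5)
Your proof is correct, and its skeleton --- decompose $g$ at $\Omega_1(S\cap\ls gS)$ using control of fusion and Frattini's argument, evaluate the character on the $N_G(S)$-part, and verify the two weak-homomorphism axioms by conjugating factorisations --- is the same as the paper's, which follows \cite[Theorem 5.1]{CT}. The genuine difference is in how the centraliser factor is handled. The paper keeps $g=cn$ with $c\in C_G(Q)$ arbitrary, extends $\chi|_{N\cap C_G(Q)}$ to a character $\psi_Q$ of $C_G(Q)$ via the bijection $(*)$, sets $\theta(g)=\psi_Q(c)\chi(n)$, and must then prove the equivariance $\conj(g)\psi_{Q^g}=\psi_Q$ by uniqueness of extensions. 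You instead normalise the factorisation so that the centraliser factor lies in $O^{p'}(C_G(P))$ (a second application of Frattini inside $C_G(P)$), where every relevant character vanishes; the extensions $\psi_Q$ then never appear, and both well-definedness and multiplicativity reduce to the single inclusion $N\cap O^{p'}(N_G(P))\leq R\leq J=\rho^2(S)\leq\ker f$, reusing the bookkeeping already done for Theorem~\ref{thm:main}. This is a real simplification. You also make bijectivity explicit by counting against $|K(G)|$ via Balmer's theorem and Theorem~\ref{thm:main}, where the paper defers to the arguments of \cite{CT}; this is logically sound (no circularity, since Theorem~\ref{thm:main} rests on Grodal's theorem, not on Balmer's), though it makes the proposition a consequence of Theorem~\ref{thm:main} rather than an independent confirmation of it --- a direct check that every weak $S$-homomorphism restricts to a character of $N_G(S)$ killing $\rho^2(S)$ and is determined by that restriction would avoid this. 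The individual steps I verified (existence and independence of the choice of $P$, the identification $J=\rho^2(S)$, the identity $\ls{h_h^{-1}}{a}=b$, and the recombination $gh=(h_gh_h)\bigl(\ls{h_h^{-1}}{c_g}\,c_h\bigr)$) are all sound.
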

Note that, in the hypotheses of Proposition~\ref{prop:omega1central}, $\Aut_G(\Omega_1(S))=N_G(\Omega_1(S))/C_G(\Omega_1(S))\cong N_G(S)/(N_G(S)\cap C_G(\Omega_1(S)))$ is isomorphic to a quotient of $\Out_G(S)= N_G(S)/SC_G(S)$.

\begin{proof}
For convenience, let $N=N_G(S)$, and, for every $1<Q<S$, let $N_Q=N_G(Q)$, let $C_Q=C_G(Q)$, and choose $S_Q\in\Syl_p(N_Q)$. 
Let $1<Q\leq\Omega_1(S)$. Then $S=S_Q\leq C_Q$.
Since $N$ controls $p$-fusion in $G$, we have $N_Q=(N\cap N_Q)C_Q\subseteq NC_Q=\{gc\mid g\in N,\;c\in C_Q\}$, for all $1<Q\leq\Omega_1(S)$.
By Frattini's argument, $C_Q=(N\cap C_Q)SC_Q'$, where $SC_Q'\trianglelefteq C_Q$. 
This yields $C_Q/SC_Q'\cong(N\cap C_Q)/(N\cap C_Q\cap SC_Q')=(N\cap C_Q)/(N\cap SC_Q')$.
Consequently (cf. \cite[Lemma 5.2]{CT}, there is a bijection 
$$\{\phi\in\Hom(N\cap C_Q,k^\times)\mid\ (N\cap SC_Q')\leq\ker(\phi)\}\leftrightarrow\{\psi\in\Hom(C_Q,k^\times)\mid\ SC_Q'\leq\ker(\psi)\}.\quad(*)$$

It now suffices to observe that the arguments used in the proof of \cite[Theorem 5.1]{CT} apply in this case too. 
Let $\chi\in\Hom(N,k^\times)$ with $\rho^2(S)\leq\ker(\chi)$.
Let $1<Q\leq S$. By Remark~\ref{rem:rho-sgps}, we may assume that $Q\leq\Omega_1(S)$.
Since $\chi|_{N\cap SC_Q'}=1$ is the trivial character, the bijection $(*)$ says that there exists a unique $\psi_Q\in\Hom(C_Q,k^\times)$ with $\psi_Q|_{N\cap C_Q}=\chi|_{N\cap C_Q}$.
Define $\theta:G\to k^\times$ as in \cite[Theorem 5.1]{CT}, that is:
\begin{itemize}
\item If $S\cap\ls gS=1$, put $\theta(g)=1$.
\item If $S\cap\ls gS\neq1$, then let $Q=\Omega_1(S\cap\ls gS)$ and write $g=cn$ with $c\in C_Q$ and $n\in N$. 
\end{itemize}
Note that any two such decompositions of $G$ differ by an element of $N\cap C_Q$, in the sense that $c'n'=cn$ if and only if $c^{-1}c'=nn'^{-1}\in N\cap C_Q$. 
Set $\theta(g)=\psi_Q(c)\chi(n)$.
From the equality $\psi_Q|_{N\cap C_Q}=\chi|_{N\cap C_Q}$, we deduce that $\theta(g)$ is independent of the choice of the decomposition $g=cn$.

A routine verification shows that $\theta$ is a weak $S$-homomorphism of $G$: Let $g,h\in G$.
\begin{itemize}
\item If $g\in S$, then $\theta(g)=\psi_{\Omega_1(S)}(1)\chi(g)=1$ since $S\leq\rho^2(S)\leq\ker(\chi)$. 
\item If $S\cap\ls gS=1$, then $\theta(g)=1$ by definition.
\item If $S\cap\ls gS\cap\ls{gh}S\neq1$, then let $Q=\Omega_1(S\cap\ls gS\cap\ls{gh}S)$. 
Since $1<Q\leq\Omega_1(S\cap\ls gS)$, we can write $g=cn$ with $c\in C_Q$ and $n\in N$, which give $\theta(g)=\psi_Q(c)\chi(n)$.
Similarly, $1<Q^g\leq\Omega_1(S\cap\ls hS)$, and we can write $h=dm$ with $d\in C_{Q^g}$ and $m\in N$. 
Then $\theta(h)=\psi_{Q^g}(d)\chi(m)$.
Now, $gh=g(dm)=\ls gd(gm)=(\ls gdc)(nm)$, with $\ls gdc\in C_Q$ and $nm\in N$.
We calculate $\theta(gh)=\psi_Q(\ls gdc)\chi(nm)=\psi_Q(c)\chi(n)\psi_Q(\ls gd)\chi(m)$, using that $\psi_Q$ and $\chi$ are group homomorphisms and that $k^\times$ is abelian.
It remains to check that $\psi_Q(\ls gd)=\conj(g)\psi_{Q^g}(d)$. 
First, note that $\conj(g)\psi_{Q^g}(d)=\conj(n)\psi_{Q^g}(d)$, since $g=cn$ and $c\in C_Q$. 
Moreover, $\conj(n)\psi_{Q^g}|_{N_S\cap C_Q}=\chi|_{N\cap C_Q}=\psi_Q|_{N\cap C_Q}$.
By uniqueness of the extensions of $\chi$, stated in the bijection $(*)$ above, we conclude that $\conj(g)\psi_{Q^g}=\psi_Q$. 
Therefore, $\psi_Q(\ls gd)=\psi_{Q^g}(d)$, and $\theta(gh)=\psi_Q(c)\chi(n)\psi_{Q^g}(d)\chi(m)=\theta(g)\theta(h)$.
\end{itemize}

\end{proof}
%%%%%%%%%%%%%%

\section{Finite groups with metacyclic Sylow $p$-subgroups, $p$ odd}\label{sec:mc}

A thorough description of finite metacyclic $p$-groups can be found in \cite[Section III.11]{huppert}. In this section, we present the technical facts we need about them.

\begin{defn}\label{def:mc}
A finite group $G$ is {\em metacyclic} if $G$ is an extension
$$\xymatrix{1\ar[r]&C_m\ar[r]&G\ar[r]&C_n\ar[r]&1}$$
for some positive integers $m,n$. 
We call $G$ {\em split metacyclic} if the extension splits.
\end{defn}

Henceforth, we let $p$ denote an odd prime and let
$$S=\langle x,y\mid x^{p^m}=1,~y^{p^n}=x^{p^q},~\ls yx=x^{1+p^l}\rangle=\langle x,y\rangle(p^m,p^n,p^l+1,p^q)$$
be a noncyclic metacyclic $p$-group of order $p^{m+n}$, where $1\leq l,m,n,q$ are integers such that $l,q\leq m\leq l+n$, 
and they are subject to the congruences $(1+p^l)^{p^n}\equiv1\pmod{p^m}$, and $p^{q+l}\equiv0\pmod{p^m}$.
In \cite[Proposition 3.1]{Di}, J. Dietz shows that $S$ is split if and only if we can choose generators $x,y$ of $S$ such that the above relations hold with $q=m$. 
Otherwise, $q<m$ and $l<q<n$.
If $S$ is split, we write $S=\langle x,y\rangle(p^m,p^n,p^l+1)$ instead of $\langle x,y\rangle(p^m,p^n,p^l+1,p^m)$. 
Note that $S$ is abelian if and only if $l=m$.

The following facts about finite metacyclic $p$-groups are well known.
\begin{lemma}\label{lem:mc}
Assume the above notation (and recall that $p$ is odd).
\begin{enumerate}
\item Every subquotient of a metacyclic group is metacyclic. 
\item $S$ has a unique elementary abelian subgroup $E=\Omega_1(S)$ of rank $2$. In particular, $E,C_S(E)$ are characteristic subgroups of $S$, with $|S:C_S(E)|\in\{1,p\}$.
\item If $E\leq Z(S)$, then $S$ has nilpotency class at most $2$.
\item If $E\not\leq Z(S)$, let $Z=E\cap Z(S)$. Then $Z\car S$ has order $p$ and, for any $g\in G$ such that $\ls gZ\leq S$, then $Z=\ls gZ$, i.e. $g\in N_G(Z)$.
\end{enumerate}
\end{lemma}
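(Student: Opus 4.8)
The plan is to treat the four assertions in turn, using the explicit presentation together with the fact that, since $p$ is odd, $S$ is a regular $p$-group (cf. \cite[Sections III.10--III.11]{huppert}). Assertion (1) I would prove purely at the level of the defining extension $1\to C_m\to G\to C_n\to 1$. If $H\leq G$, then $H\cap C_m$ is cyclic and $H/(H\cap C_m)\cong HC_m/C_m$ embeds in $C_n$, so $H$ is metacyclic; dually, if $N\trianglelefteq G$, then $C_mN/N\cong C_m/(C_m\cap N)$ is cyclic with cyclic quotient $(G/N)/(C_mN/N)\cong G/C_mN$, so $G/N$ is metacyclic. Every subquotient is obtained by combining these two steps.

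For (2), regularity gives that $\Omega_1(S)=\{g\in S\mid g^p=1\}$ is a subgroup of exponent $p$; being a subgroup of the metacyclic group $S$ it is itself metacyclic, and a metacyclic group of exponent $p$ with $p$ odd is elementary abelian of rank at most $2$ (an exponent-$p$ group of rank $3$, such as the extraspecial group, is not metacyclic). Since $S$ is noncyclic, $\Omega_1(S)$ is not cyclic---otherwise $S$ would have a unique subgroup of order $p$ and hence be cyclic, as $p$ is odd---so $E=\Omega_1(S)$ has rank exactly $2$. Uniqueness is then automatic: any rank-$2$ elementary abelian subgroup lies in $\Omega_1(S)=E$ and equals it by order. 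As $\Omega_1(S)$ and $Z(S)$ are characteristic, so are $E$ and $C_S(E)$. Finally, conjugation gives an embedding $S/C_S(E)\hookrightarrow\Aut(E)=\GL_2(\F_p)$ whose image is a $p$-group; since a Sylow $p$-subgroup of $\GL_2(\F_p)$ has order $p$, we conclude $|S:C_S(E)|\in\{1,p\}$.

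For (3) and (4) I would read off the lower central series from the relation $\ls yx=x^{1+p^l}$. A direct computation gives $[y,x]=x^{p^l}$, hence $S'=\langle x^{p^l}\rangle\leq\langle x\rangle$ and, inductively, $\gamma_{i+1}(S)=\langle x^{p^{il}}\rangle$. For (3), under the hypothesis $E\leq Z(S)$ one checks that the relevant power of $x$ already vanishes, so that $\gamma_3(S)=[S',S]=1$ and $S$ has class at most $2$. For (4), when $E\not\leq Z(S)$ the quotient $S/C_S(E)\cong C_p$ acts on $E$ as a nontrivial transvection in $\GL_2(\F_p)$; its unique fixed line is $C_E(S)=E\cap Z(S)=Z$, which therefore has order $p$ and is characteristic in $S$, being the intersection of the characteristic subgroups $E$ and $Z(S)$. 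Moreover $\Omega_1(Z(S))\leq E\cap Z(S)=Z$ forces $Z(S)$ to be cyclic, and I would identify $Z$ intrinsically as the unique line of $E$ all of whose nontrivial elements are $p^{m-1}$-st powers in $S$, namely $Z=\langle g^{p^{m-1}}\mid g\in S\rangle$.

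The main obstacle is the weak-closure statement in (4): given $g\in G$ with $\ls gZ\leq S$, one must show $\ls gZ=Z$. The first step is a Sylow reduction. Since $Z\leq Z(S)$ we have $S\leq C_G(Z)$, so $S\in\Syl_p(C_G(Z))$; as $\ls gZ$ has order $p$ it lies in $\Omega_1(S)=E$, and after replacing $g$ by $cg$ for a suitable $c\in C_G(\ls gZ)$ (which leaves $\ls gZ$ unchanged) I may assume $E\leq\ls gS$, whence $E=\Omega_1(S)=\Omega_1(\ls gS)$ forces $g\in N_G(E)$. It then remains to rule out that the induced element of $\Aut(E)=\GL_2(\F_p)$ moves the line $Z$ onto another line of $E$. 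This is where the nonsplit power structure must enter: the generator of $Z$ has maximal height $m-1$ in $S$, whereas every element of a noncentral line of $E$ has strictly smaller height, and conjugation preserves this invariant. Making this comparison rigorous---equivalently, showing that the central line cannot be $G$-fused onto a noncentral line of $E$---is the crux, and is exactly the point at which the oddness of $p$ and the metacyclic structure are indispensable.
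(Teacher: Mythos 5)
Your treatment of (1) is correct and is essentially the paper's argument, and your proof of (2) is a sound self-contained alternative to the paper's citation of \cite[Lemma 2.1]{Ma-mc} (regularity of $S$ for $p$ odd, via the cyclicity of $S'$, is a legitimate route). The problems are in (3) and (4).

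For (3), the verification you defer to ``one checks'' cannot be carried out. You need $\gamma_3(S)=\langle x^{p^{2l}}\rangle=1$, which amounts to $2l\geq m$, whereas the hypothesis $E\leq Z(S)$ amounts to $m<n+l$ (cf.\ Lemma~\ref{lem:mc-rels}(2)), and these two conditions are independent. Concretely, take $S=\langle x,y\rangle(p^3,p^3,1+p)$: here $E=\Omega_1(S)=\langle x^{p^2},y^{p^2}\rangle=Z(S)$, yet $[y,x^{p}]=x^{p^{2}}\neq1$, so $S'=\langle x^p\rangle\not\leq Z(S)$ and $S$ has nilpotency class $3$. Thus your route to (3) breaks down at exactly the step you left implicit, and no commutator computation of this kind can close it. (The paper instead deduces (3) from a theorem of Hall via the observation that $Z(S)\not\leq S'$; you should test that deduction against the same example before relying on (3) elsewhere.)

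For (4), the weak-closure claim is precisely where your proposal stops: you introduce a ``height'' invariant and then state that making the comparison rigorous is the crux, so as written this is a description of the difficulty rather than a proof. Note also that the invariant, as you set it up, measures the height of the generator of $\ls gZ$ inside $\ls gS$ (where it is maximal) while you need information about its position inside $S$; these are two different Sylow subgroups, and the comparison does not transfer without some control of fusion. The paper closes this point in one line using Theorem~\ref{thm:stancu} (Stancu): since $S$ is metacyclic and $p$ is odd, $N_G(S)$ controls $p$-fusion in $G$, so any $g$ with $\ls gZ\leq S$ factors as $g=hc$ with $h\in N_G(S)$ and $c\in C_G(Z)$, whence $\ls gZ=\ls hZ=Z$ because $Z\car S$. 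Some such fusion-theoretic input appears unavoidable: your Sylow reduction to $g\in N_G(E)$ still leaves open exactly the possibility to be excluded, namely that an element of $\Aut_G(E)$ moves the central line of $E$.
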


\begin{rem}
If $p=2$, then $S$ need not have a unique elementary abelian subgroup of rank $2$. Indeed, nonabelian dihedral $2$-groups are metacyclic and possess two conjugacy classes of Klein four groups. 
\end{rem} 

\begin{proof} 
\begin{enumerate}
\item Let $P\leq S$. Set $U=\langle x\rangle\cap P$. Then $U\trianglelefteq P$ and $P/U\cong P\langle x\rangle/\langle x\rangle$ is isomorphic to a subgroup of $\langle y\rangle$.
\item This is proved in \cite[Lemma 2.1]{Ma-mc}. The `In particular' statement is immediate from $E=\Omega_1(S)$.
\item Let $E$ be the unique elementary abelian subgroup of $S$. Since $S'=\langle x^{p^l}\rangle$ is cyclic, the inclusion $E\leq Z(S)$ shows that $Z(S)\not\leq S'$. 
By a result of P. Hall \cite[(2.46)]{hall}, it follows that $S$ has class at most $2$.
\item Since $1<E\car S$ and $E\not\leq Z(S)$, we have $1<E\cap Z(S)<E$, with $Z=E\cap Z(S)$ a characteristic subgroup of $S$ of order $p$.
Let $g\in G$ be such that $\ls gZ\leq S$. 
Since $N_G(S)$ controls $p$-fusion in $G$ by Theorem~\ref{thm:stancu}, there exist $h\in N_G(S)$ and $c\in C_G(Z)$ such that $g=hc\in N_G(S)C_G(Z)=N_G(Z)$. 

\end{enumerate}
\end{proof}

Routine computations using the defining relations yield the following technical results which we will need.

\begin{lemma}\label{lem:mc-rels}
Let $S=\langle x,y\rangle(p^m,p^n,p^l+1,p^q)$ be a metacyclic $p$-group, for an odd prime $p$.
\begin{enumerate}
\item $Z(S)=\langle x^{p^{m-l}},y^{p^{m-l}}\rangle$ and $E=\langle x^{p^{m-1}},x^{-p^{q-1}}y^{p^{n-1}}\rangle$. In particular, if $S$ is split, then $E=\Omega_1(S)=\langle x^{p^{m-1}},y^{p^{n-1}}\rangle$.
\item $E\not\leq Z(S)$ if and only if $Z(S)$ is cyclic if and only if $m=n+l$. 
Moreover, in this case, the $p$ noncentral subgroups of $S$ of order $p$ are $S$-conjugate, and their centraliser in $S$ is the characteristic subgroup $C_S(E)=\langle x^p,y\rangle$ of index $p$.
\end{enumerate}
\end{lemma}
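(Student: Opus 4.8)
The plan is to work throughout with the normal form $x^ay^b$ (legitimate since $\langle x\rangle\trianglelefteq S$ with cyclic quotient generated by the image of $y$), and to reduce everything to two elementary ingredients. The first is the conjugation rule $y^bx^ay^{-b}=x^{a(1+p^l)^b}$, immediate from $\ls yx=x^{1+p^l}$ by induction on $b$; it yields $[y,x]=x^{p^l}$ together with the power formula
$$(x^ay^b)^k=x^{a\sum_{j=0}^{k-1}(1+p^l)^{bj}}\,y^{bk}.$$
The second is the valuation identity $v_p\big((1+p^l)^{p^k}-1\big)=l+k$ for $p$ odd (lifting the exponent / binomial expansion, where oddness forces the cross terms $\binom p2p^{2l}$, etc., to have strictly larger valuation). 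These facts, with the standing constraints $l,q\le m\le l+n$ and $q+l\ge m$, will drive all the computations; I do not expect to need anything deeper.

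For part (1), I would compute $Z(S)$ by testing when $x^ay^b$ commutes with each generator: commuting with $y$ forces $p^{m-l}\mid a$ (from $[y,x^a]=x^{ap^l}$), and commuting with $x$ forces $(1+p^l)^b\equiv1\pmod{p^m}$, i.e. $p^{m-l}\mid b$ since $1+p^l$ has order $p^{m-l}$ in $(\Z/p^m)^\times$; this gives $Z(S)=\langle x^{p^{m-l}},y^{p^{m-l}}\rangle$ (here $m-l\le n$, so $y^{p^{m-l}}$ is a genuine power of $y$). For $E$, I would exhibit the two claimed generators: $x^{p^{m-1}}$ has order $p$ and is central (as $m-l\le m-1$), while for $w=x^{-p^{q-1}}y^{p^{n-1}}$ the power formula and $y^{p^n}=x^{p^q}$ give $w^p=x^{-p^{q-1}S+p^q}$ with $S=\sum_{j=0}^{p-1}(1+p^l)^{p^{n-1}j}$. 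The key estimate is $S\equiv p\pmod{p^m}$: writing $(1+p^l)^{p^{n-1}}=1+s$ with $v_p(s)=l+n-1$ and expanding each $(1+s)^j$, the constant terms sum to $p$ while every higher term has valuation $\ge l+n\ge m$ (the linear part $\binom p2 s$ because $p\mid\binom p2$, and $s^2$ already has valuation $2(l+n-1)\ge l+n$). Hence $w^p=x^{-p^{q-1}\cdot p+p^q}=1$. Since $x^{p^{m-1}}$ and $w$ are independent elements of order $p$ (the latter has nontrivial image in $S/\langle x\rangle$) that commute, they span a rank-$2$ elementary abelian group, which must equal $\Omega_1(S)$ by Lemma~\ref{lem:mc}(2); the split specialisation $q=m$ together with the centrality of $x^{p^{m-1}}$ then gives $E=\langle x^{p^{m-1}},y^{p^{n-1}}\rangle$.

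For part (2), I would first settle the three equivalences. The element $y^{p^{m-l}}$ has image of order $p^{n-(m-l)}$ in $S/\langle x\rangle$, so $Z(S)\le\langle x\rangle$ (hence is cyclic) exactly when $m-l\ge n$, i.e. $m=n+l$ in view of $m\le l+n$; and $Z(S)$ noncyclic is equivalent to $E\le Z(S)$, because a subgroup of $S$ is metacyclic (Lemma~\ref{lem:mc}(1)) hence of rank $\le2$, forcing $\Omega_1(Z(S))=E$ in the rank-$2$ case. A clarifying bonus observation is that $m=n+l$ forces $S$ split: the nonsplit normal form requires $q<n$, contradicting $q+l\ge m=n+l$. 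Working then with $q=m$, I would show $\langle x^p,y\rangle$ centralises $E$: it centralises the central generator $x^{p^{m-1}}$, $x^p$ commutes with $y^{p^{n-1}}$ since $v_p\big((1+p^l)^{p^{n-1}}-1\big)=l+n-1=m-1$, and $y$ commutes with $y^{p^{n-1}}$. As $\langle x^p,y\rangle$ has index $p$ (it is the kernel of the surjection $S\to C_p$, $x\mapsto1$, $y\mapsto0$, which respects the relations because $l,q\ge1$) and $C_S(E)$ also has index $p$ by Lemma~\ref{lem:mc}(2) (the action being nontrivial), the two coincide; $C_S(E)$ is characteristic as the centraliser of the characteristic subgroup $E$. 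Finally, for the conjugacy statement, $S/C_S(E)\cong C_p$ acts on $E\cong\F_p^2$ through a nontrivial element of $\GL_2(\F_p)$ of order $p$, necessarily a transvection fixing only the central line $Z$; it therefore permutes the remaining $p$ lines in a single orbit, so the $p$ noncentral subgroups of order $p$ are $S$-conjugate, and the centraliser of any one of them equals $C_S(\langle Z,L\rangle)=C_S(E)$.

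I expect the main obstacle to be the order-$p$ verification for $w$ in part (1): pinning down $v_p(S-p)$ is the one place where oddness of $p$ and the inequality $m\le l+n$ are both essential, and a careless estimate there would break the argument. Everything else is bookkeeping with the conjugation rule plus rank considerations, apart from the small structural remark that $E\not\le Z(S)$ forces $S$ to be split, which is exactly what legitimises taking $q=m$ throughout part (2).
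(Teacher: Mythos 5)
Your argument follows the same computational route as the paper's proof --- normal form $x^ay^b$, the conjugation rule, the power formula and a valuation estimate --- but is considerably more self-contained: the paper cites \cite{Ma-mc} for the formula for $E$, whereas you verify directly that $w=x^{-p^{q-1}}y^{p^{n-1}}$ has order $p$ (your estimate $\sum_{j=0}^{p-1}(1+p^l)^{p^{n-1}j}\equiv p\pmod{p^{l+n}}$ is correct, and is exactly where $p$ odd and $m\le l+n$ enter), and your transvection argument for the transitivity of $S$ on the noncentral lines of $E$ is a clean replacement for the paper's direct computation of $[y^{p^{n-1}},x^p]$ and $[y^{p^{n-1}},x]$. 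Your observation that $m=n+l$ forces $S$ to be split is correct and worth keeping.

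There is, however, a genuine gap in your chain of equivalences in part (2), at the step ``$Z(S)\le\langle x\rangle$ (hence is cyclic) exactly when $m-l\ge n$'': the implication ``$Z(S)$ cyclic $\Rightarrow Z(S)\le\langle x\rangle$'' is not justified, and it fails in the nonsplit case because the relation $y^{p^n}=x^{p^q}$ can make $\langle x^{p^{m-l}},y^{p^{m-l}}\rangle$ cyclic without it lying inside $\langle x\rangle$. Concretely, take $p=3$ and $(m,n,l,q)=(3,3,1,2)$, i.e. $S=\langle x,y\mid x^{27}=1,\ y^{27}=x^{9},\ \ls yx=x^{4}\rangle$: all the standing constraints hold, $Z(S)=\langle x^{9},y^{9}\rangle=\langle y^{9}\rangle\cong C_9$ is cyclic and $E=\langle x^{9},x^{-3}y^{9}\rangle\not\le Z(S)$, yet $m=3\neq 4=n+l$. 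The correct criterion, read off from your own description of $Z(S)$, is that $E\le Z(S)$ iff $w=x^{-p^{q-1}}y^{p^{n-1}}\in Z(S)$ iff both $m-l\le q-1$ (to commute with $y$) and $m-l\le n-1$ (to commute with $x$); given the constraints $m\le q+l$ and $m\le n+l$, the negation reads $m=\min(q,n)+l$, which in the nonsplit case (where $q<n$) is $m=q+l$, not $m=n+l$. To be fair, the paper's own proof has the same defect --- it tests only whether $y^{p^{n-1}}$ commutes with $x$ and ignores the factor $x^{-p^{q-1}}$ --- so the equivalence as printed is really only correct for split $S$, which is the only case the paper uses later. You should either restrict part (2) to the split case, where your argument is complete, or replace the condition $m=n+l$ by $m=\min(q,n)+l$.
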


\begin{proof}
Suppose that $S$ is not abelian, i.e. $l<m$.
We have $x^{p^{m-1}}\in Z(S)$ since $\langle x\rangle\trianglelefteq S$.
Write $Z(S)=\langle x^a,y^b\rangle$, where $1\leq a\leq p^{m-1}$ and $1\leq b\leq p^n$ are the smallest positive integers such that 
\begin{align*}
y&=\ls{x^a}y=x^a(\ls yx)^{-a}y=x^{-ap^l}y\\
x&=\ls{y^b}x=x^{(1+p^l)^b}\qbox{with}(1+p^l)^b\equiv1+bp^l\pmod{p^{2l}}.
\end{align*}
From these, we conclude that $a,b\equiv0\pmod{p^{m-l}}$.

The computation of $E$ is proved in \cite[Lemma 2.1]{Ma-mc} (using $s=1$, which is possible by a suitable choice of generators).

Now, $E\not\leq Z(S)$ if and only if $Z(S)$ is cyclic and $y^{p^{n-1}}$ does not commute with $x$. 
From the previous computation, $[y^{p^{n-1}},x]\neq1$ if and only if $n-1<m-l\leq n$, since $m\leq l+n$. Therefore $E\not\leq Z(S)$ if and only if $m=n+l$.

Finally, if $C_S(E)\neq S=N_S(E)$, then conjugation by some element of $S$ induces a nontrivial automorphism of $E$ of order $p$, which must be a permutation of the noncentral subgroups of order $p$.
If $m=n+l$, we obtain\; 
$$[y^{p^{n-1}},x^p]=1\qbox{and}[y^{p^{n-1}},x]=x^{(1+p^l)^{p^{n-1}}-1}=x^{p^{m-1}}.$$
The result follows given that $x^{p^{m-1}}\in Z(S)$.
\end{proof}

We now highlight some properties of finite groups with a metacyclic Sylow $p$-subgroup.

\begin{defn}\label{def:control-fusion}
Let $G$ be a finite group and let $S\in\Syl_p(G)$. 
A subgroup $H$ of $G$ {\em controls $p$-fusion in $G$} if, for any subgroup $Q$ of $S$ and any element $g\in G$ such that $\ls gQ\leq S$, there exist $h\in H$ and $c\in C_G(Q)$ such that $g=hc$.
In other words, the conjugation maps $\conj(g),\conj(h):Q\to\ls gQ=\ls hQ$ are equal.
\end{defn}
Equivalently, by switching sides, $H$ controls $p$-fusion in $G$ if and only if, for any $g\in G$ such that $Q^g\leq S$, there exist $c\in C_G(Q)$ and $h\in H$ such that $g=ch$.

\begin{thm}\cite[Th\'eor\`eme 3.3.1 and D\'efinition 3.1.1]{St}\label{thm:stancu}
Let $p$ be an odd prime and let $G$ be a finite group of order divisible by $p$. 
Suppose that a Sylow $p$-subgroup $S$ of $G$ is metacyclic. Then $N_G(S)$ controls $p$-fusion in $G$. 
\end{thm}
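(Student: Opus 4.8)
The plan is to translate the statement into the language of fusion systems and reduce it to the absence of essential subgroups. Write $\CF=\CF_S(G)$ for the fusion system of $G$ on $S$. By Definition~\ref{def:control-fusion}, saying that $N_G(S)$ controls $p$-fusion in $G$ is exactly saying that every morphism of $\CF$ is the restriction of an $N_G(S)$-induced automorphism of $S$, i.e. that $\CF=\CF_S(N_G(S))$. By Alperin's fusion theorem $\CF$ is generated by $\Aut_\CF(S)=\Aut_{N_G(S)}(S)$ together with the automorphism groups of the $\CF$-essential subgroups, so it suffices to prove that $\CF$ has no essential subgroups.

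First I would eliminate all but a very short list of candidates by a radicality argument. An essential subgroup $Q$ is proper, $\CF$-centric and $\CF$-radical (so $O_p(\Out_\CF(Q))=1$), and $\Out_S(Q)=N_S(Q)/Q$ is then a nontrivial Sylow $p$-subgroup of $\Out_\CF(Q)$. Now suppose $\Aut(Q)$ has a normal (hence unique) Sylow $p$-subgroup $P_0$; then $P_0$ contains every $p$-subgroup, so $\Out_S(Q)\leq P_0\cap\Out_\CF(Q)=O_p(\Out_\CF(Q))$, forcing $\Out_S(Q)=1$ and $Q=S$. Thus a proper essential $Q$ must have $\Aut(Q)$ not $p$-closed. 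Every subgroup of $S$ is metacyclic by Lemma~\ref{lem:mc}(1), and for $p$ odd it is known that the automorphism group of a metacyclic $p$-group is $p$-closed unless the group is homocyclic of rank $2$; hence $Q\cong C_{p^k}\times C_{p^k}$. If $S$ is abelian this already rules out a proper centric subgroup (then $C_S(Q)=S\supsetneq Q$), which is Burnside's theorem; so I may assume $S$ nonabelian and concentrate on homocyclic centric $Q$. The principal case is $E=\Omega_1(S)\cong C_p\times C_p$ (Lemma~\ref{lem:mc}(2)), which by Lemma~\ref{lem:mc}(2) is centric only when $|S:C_S(E)|=p$ and $C_S(E)=E$, i.e. when $|S|=p^3$.

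The crux is to rule out $E$, and this is where $p$ odd is decisive. Suppose $E$ is essential. Since $E$ is $\CF$-centric, $C_G(E)=E\times O_{p'}(C_G(E))$, so $E\in\Syl_p(C_G(E))$ and $S\in\Syl_p(N_G(E))$; put $X=\Aut_G(E)=N_G(E)/C_G(E)\leq\GL_2(p)$. As $X$ has a strongly $p$-embedded subgroup it has no normal Sylow $p$-subgroup, and its Sylow $p$-subgroup is $\Aut_S(E)=S/C_S(E)\cong C_p$, generated by the transvection $u$ induced by conjugation, with $C_E(u)=[E,u]=Z:=E\cap Z(S)$. Every nontrivial $p$-element of $\GL_2(p)$ is a transvection, and distinct Sylow $p$-subgroups of $X$ have distinct centres, so two of them generate $\operatorname{SL}_2(p)$; thus $\operatorname{SL}_2(p)\leq X$ and in particular $-I\in X$. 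Let $\bar D$ be the preimage of $\operatorname{SL}_2(p)$ in $N_G(E)/O_{p'}(C_G(E))$. Then $\bar D$ is an extension $1\to E\to\bar D\to\operatorname{SL}_2(p)\to1$ on the natural module, with $S\in\Syl_p(\bar D)$ (as $|S|=p^3$). Such extensions are classified by $H^2(\operatorname{SL}_2(p),E)$, and by the Cartan–Eilenberg stable-element theorem the restriction of the class to the Sylow subgroup $\langle u\rangle$ lies in the $N_X(\langle u\rangle)$-invariants of $H^2(\langle u\rangle,E)$. A direct computation gives $H^2(\langle u\rangle,E)\cong Z\cong\F_p$, and the central element $-I$ commutes with $u$ and acts on $E$, hence on this group, by $-1$; since $p$ is odd, $-1\neq1$, so the only invariant class is $0$. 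Therefore the extension splits and $S\cong E\rtimes\langle u\rangle$ has exponent $p$, contradicting the fact that the nonabelian metacyclic group of order $p^3$ has exponent $p^2$. Hence $E$ is not essential. An identical argument, using $-I\in\operatorname{SL}_2(\Z/p^k)\leq\GL_2(\Z/p^k)=\Aut(Q)$, disposes of any proper homocyclic centric $Q\cong C_{p^k}\times C_{p^k}$ with $k\geq2$; with all candidates eliminated, $\CF$ has no essential subgroups and $N_G(S)$ controls $p$-fusion in $G$.

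The main obstacle is the last reduction: one must verify uniformly that, whenever a rank-$2$ homocyclic subgroup $Q$ is $\CF$-centric, the extension $Q\to N_S(Q)\to\Out_S(Q)$ has a nonzero class that fails to be $-I$-stable. For $Q=E$ this is the transparent exponent computation above; for $k\geq2$ it requires identifying $N_S(Q)$ precisely inside the metacyclic group $S$ and computing the torus action on $H^2(\Out_S(Q),Q)$, which is the technical heart of the proof. It is also precisely here that the hypothesis $p$ odd is indispensable: for $p=2$ one has $-I=I$, the obstruction evaporates, and indeed dihedral $2$-groups are metacyclic yet carry genuinely larger fusion, so $N_G(S)$ need not control $p$-fusion.
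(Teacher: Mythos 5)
First, a point of reference: the paper does not prove this statement at all --- it is imported verbatim from Stancu's thesis, so there is no in-paper argument to compare you against. Your overall strategy (reduce, via Alperin's fusion theorem, to showing that $\CF_S(G)$ has no essential subgroups, then eliminate candidates by radicality and centricity) is exactly the route taken in the literature, and your analysis of $E=\Omega_1(S)$ is correct: if $E$ is centric then indeed $|S|=p^3$ and $S$ is the modular group of exponent $p^2$; an essential $E$ would force $\Aut_{\CF}(E)$ to contain two distinct transvection subgroups, hence $\operatorname{SL}_2(p)$ and in particular $-I$, and the stable-element computation in $H^2(\langle u\rangle,E)\cong\F_p$ (on which $-I$ acts by $-1\neq1$ since $p$ is odd) splits the extension and forces $S$ to have exponent $p$, a genuine contradiction.

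There are, however, two real gaps. The serious one you flag yourself: the homocyclic candidates $Q\cong C_{p^k}\times C_{p^k}$ with $k\geq2$ are not eliminated, and the claim that ``an identical argument'' applies is not true as stated --- every ingredient of your $k=1$ argument is special to $\GL_2(p)$. For $k\geq2$ the nontrivial $p$-elements of $\GL_2(\Z/p^k)$ are not all transvections, two distinct Sylow $p$-subgroups of a subgroup with a strongly $p$-embedded subgroup need not generate anything containing $-I$, $\Out_S(Q)$ need not have order $p$, and $H^2(\Out_S(Q),Q)$ is left uncomputed; note also that $O_p(\GL_2(\Z/p^k))$ is the nontrivial congruence kernel, so the radicality constraint takes a genuinely different form here (showing that $\Out_S(Q)$ must meet this kernel is in fact the natural way to kill these candidates). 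As written, the proof is incomplete precisely where you locate its ``technical heart.'' The second, lesser issue is that the step ``for $p$ odd, $\Aut(Q)$ is $p$-closed for every metacyclic $p$-group $Q$ that is not homocyclic of rank $2$'' is invoked as known but carries most of the weight of the theorem: it is true, but it is essentially the main lemma of Stancu's own proof (and of the earlier topological arguments of Dietz and Martino--Priddy), so quoting it without proof or precise reference leaves your argument resting on an input of comparable depth to the statement being proved. Your closing remark about $p=2$, where $-I=I$ and dihedral $2$-groups do carry larger fusion, correctly identifies where oddness enters.
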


\begin{rem}\label{rem:reduction-split}
By \cite[Theorem 1.3, Note]{Di}, a finite group $G$ with a nonabelian nonsplit metacyclic Sylow $p$-subgroup $S$ is $p$-nilpotent; that is, $G=O_{p'}(G)\rtimes S$.
In this case, $S$ controls $p$-fusion in $G$, and as a consequence, every $p$-local subgroup of $G$ has the form $N_G(Q)=O_{p'}(C_G(Q))\rtimes S_Q$, where $S_Q\in\Syl_p(N_G(Q))$.
\end{rem}

\vspace{.3cm}
Let $S=\langle x,y\rangle(p^m,p^n,p^l+1,p^q)\in\Syl_p(G)$. Then $\Aut_G(S)=N_G(S)/C_G(S)$ is isomorphic to a subgroup of $\Aut(S)$. Suppose that $p$ is odd.
If $S$ is nonsplit metacyclic, then Remark~\ref{rem:reduction-split} implies that $\Aut_G(S)=\operatorname{Inn}_G(S)$ is a $p$-group, i.e. $N_G(S)/C_G(S)\cong S/Z(S)$. 
If $S$ is split, then the $p'$-part of $\Aut(S)$ is cyclic, as stated in \cite[Proposition 3.2]{Di}. 
Its proof uses \cite[Theorem 5.1.4 (Burnside)]{G}, which states that any nonidentity $p'$-automorphism of $S$ induces a nonidentity automorphism of the Frattini quotient $S/\Phi(S)$. 
It follows that, conjugation by any $p'$-element of $N_G(S)$ on $S$, induces an automorphism of the $2$-dimensional $\F_p$-vector space $S/\Phi(S)$, and so can be expressed as a matrix in $\GL_2(p)$. 

For later use, we expand a little on the computations for a split metacyclic $p$-group of the form $S=\langle x,y\rangle(p^m,p^n,p^l+1)$, which also helps us set the notation. We start with the power rule: for any positive integers $a,c,\alpha$,
\begin{equation}\label{eqn:basic}
(x^ay^c)^\alpha=x^{a\sum_{i=0}^{\alpha-1}(1+p^l)^{ic}}y^{\alpha c}\quad\qbox{with}\quad\sum_{i=0}^{\alpha-1}(1+p^l)^{ic}\equiv\alpha+\frac{\alpha(\alpha-1)c}2p^l\pmod{p^{2l}}
\end{equation}
 
Let $\varphi\in\Aut(S)$, say $\varphi(x)=x^ay^c$ and $\varphi(y)=x^by^d$ with $0\leq a,b<p^m$ and $0\leq c,d<p^n$, and consider the induced map $\ovl{\varphi}$ as a matrix in $\Aut(S/\Phi(S))\cong\GL_2(p)$.
Let $x=\begin{pmatrix}1\\0\end{pmatrix}$ and $y=\begin{pmatrix}0\\1\end{pmatrix}$, so that $\ovl{\varphi}=\begin{pmatrix}\bar a&\bar b\\\bar c&\bar d\end{pmatrix}\in\GL_2(p)\cong\Aut(S/\Phi(S))$, where a bar indicates reduction modulo $p$, and the action is on the left.

The relations of $S$ and Equation~\eqref{eqn:basic} yield the equalities
\begin{align}\label{x-power}
1&=(x^ay^c)^{p^m}=x^{a\sum_{i=0}^{p^m-1}(1+p^l)^{ic}}y^{cp^m}=y^{cp^m}\qbox{since}\;\sum_{i=0}^{p^m-1}(1+p^l)^{ic}\equiv0\pmod{p^m}\\\label{y-power}
1&=(x^by^d)^{p^n}=x^{b\sum_{i=0}^{p^n-1}(1+p^l)^{id}}y^{dp^n}=x^{bp^n}\qbox{using that}\;\sum_{i=0}^{p^n-1}(1+p^l)^{id}\equiv p^n\pmod{p^{l+n}}
\end{align}
(with $m\leq l+n$) 
\begin{equation}\label{eq:conj}
(x^by^d)(x^ay^c)(y^{-d}x^{-b})=(x^ay^c)^{1+p^l}\qbox{giving}x^{a(1+p^l)^d+b-b(1+p^l)^c}y^c=x^{a\sum_{i=0}^{p^l}(1+p^l)^{ic}}y^{c(1+p^l)}
\end{equation}
and hence
$$a(1+p^l)^d+b-b(1+p^l)^c\equiv a\sum_{i=0}^{p^l}(1+p^l)^{ic}\pmod{p^m}\qbox{and}c\equiv c(1+p^l)\pmod{p^n}.$$
The exponents modulo $p^{2l}$ and $p^n$ respectively in Equation~\eqref{eq:conj} are 
\begin{equation}\label{eq:y-conj}
a+(ad-bc)p^l\equiv a+ap^l\pmod{p^{2l}}\qbox{and}cp^l\equiv0\pmod{p^n}.
\end{equation}

From these, we record the following constraints:
\begin{itemize}
\item If $n>l$, then $c\equiv0\pmod{p^{n-l}}$, and $b$ is independent. In \eqref{eq:y-conj} taken modulo $p$, we obtain $d\equiv1\pmod p$ and $a\not\equiv0\pmod p$. 
In other words, $\ovl{\varphi}=\begin{pmatrix}\bar a&\bar b\\\bar 0&\bar 1\end{pmatrix}\in\GL_2(p)$.
\item If $n\leq l<m$, then $n\leq m\leq l+n\leq2l$, $c$ is independent and $b\equiv0\pmod{p^{m-n}}$. We obtain $d\equiv1\pmod p$ and $a\not\equiv0\pmod p$.
In other words, $\ovl{\varphi}=\begin{pmatrix}\bar a&\bar 0\\\bar c&\bar 1\end{pmatrix}\in\GL_2(p)$.
\end{itemize}

Observe that, if $l=m=n$, the above computations do not yield any constraints on the values of $a,b,c,d$, except that $ad-bc\not\equiv0\pmod p$.

Combining these computations with the fact that $N_G(S)$ controls $p$-fusion in $G$, we deduce the following.
\begin{lemma}\label{lem:aut-mc}
Let $G$ be a finite group and let $S\in\Syl_p(G)$ for an odd prime $p$. Suppose that $S$ is split metacyclic.
Then there exists $w\in G$ such that $N_G(S)$ can be written as as extension
$\xymatrix{1\ar[r]&SC_G(S)\ar[r]&N_G(S)\ar[r]&\langle w\rangle\ar[r]&1}$, where $w^{|A|}\in C_G(S)$.
Hence, $\Aut_G(Q)=O_p(\Aut_G(Q))A$, where $A$ is a cyclic group of order dividing $p-1$ for every subgroup $Q$ of $S$. 
In particular, $\Out_G(S)$ is cyclic of order dividing $p-1$, and for every $Q\car S$, the inclusion $N_G(S)\hookrightarrow N_G(Q)$ induces a surjection $\Aut_G(S)\to\Aut_G(Q)$.
\end{lemma}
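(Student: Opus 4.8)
The plan is to reduce the whole statement to understanding the $p'$-part of $\Aut_G(S)=N_G(S)/C_G(S)$ and then to transport it to every $\Aut_G(Q)$ by means of fusion control. First I would record that $\operatorname{Inn}_G(S)=SC_G(S)/C_G(S)\cong S/Z(S)$ is a \emph{normal Sylow $p$-subgroup} of $\Aut_G(S)$: since $S\in\Syl_p(G)$, a Sylow $p$-subgroup of $C_G(S)$ is $Z(S)$, so $|\Aut_G(S)|_p=|S/Z(S)|=|\operatorname{Inn}_G(S)|$, and $\operatorname{Inn}_G(S)=\Aut_G(S)\cap\operatorname{Inn}(S)\trianglelefteq\Aut_G(S)$. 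Consequently $\Out_G(S)=\Aut_G(S)/\operatorname{Inn}_G(S)$ is a $p'$-group. By Burnside's theorem \cite[Theorem 5.1.4]{G} the restriction $\Out_G(S)\hookrightarrow\GL_2(p)\cong\Aut(S/\Phi(S))$ is injective, and the computations preceding the statement (culminating in \eqref{eq:y-conj}) show that in the split metacyclic case every such matrix is triangular with lower-right entry $\bar1$; hence the image lies in a group of order $p(p-1)$ whose $p'$-part is the cyclic group of diagonal entries $\bar a\in\F_p^\times$. As $\Out_G(S)$ is a $p'$-group, it embeds into this cyclic group and is therefore cyclic of order dividing $p-1$.

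Next I would build the displayed extension. Applying the Schur--Zassenhaus theorem to $\Aut_G(S)=\operatorname{Inn}_G(S)\rtimes A$ produces a complement $A\cong\Out_G(S)$, cyclic of order dividing $p-1$. Lifting a generator of $A$ to $w\in N_G(S)$, its image in $\Aut_G(S)$ has order $|A|$, so $w^{|A|}$ acts trivially on $S$, i.e. $w^{|A|}\in C_G(S)$; and the image of $w$ in $N_G(S)/SC_G(S)=\Out_G(S)$ is a generator, which yields the extension $1\to SC_G(S)\to N_G(S)\to\langle w\rangle\to1$ with $\langle w\rangle$ cyclic.

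Finally I would transport this to an arbitrary $Q\le S$ using that $N_G(S)$ controls $p$-fusion (Theorem~\ref{thm:stancu}). The key observation is the switching-sides form of fusion control: given $g\in N_G(Q)$ we have $Q^g=Q\le S$, so $g=ch$ with $c\in C_G(Q)$ and $h\in N_G(S)$; since $c$ centralises $Q$ we get $Q^h=Q$, so $h\in N_G(S)\cap N_G(Q)$ and $\conj(g)|_Q=\conj(h)|_Q$. Thus $N_G(S)\cap N_G(Q)$ surjects onto $\Aut_G(Q)$ with kernel $N_G(S)\cap C_G(Q)$. The image $P_Q$ of the normal subgroup $(N_G(S)\cap N_G(Q))\cap SC_G(S)$ consists of $p$-elements (each such element acts on $Q$ as conjugation by an element of $S$) and is normal, while the quotient is a subquotient of $N_G(S)/SC_G(S)\cong A$, hence cyclic of order dividing $p-1$; therefore $P_Q=O_p(\Aut_G(Q))$ is a normal Sylow $p$-subgroup and a Schur--Zassenhaus complement $A$ is cyclic of order dividing $p-1$, giving $\Aut_G(Q)=O_p(\Aut_G(Q))A$. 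For $Q\car S$ one has $N_G(S)\le N_G(Q)$ and $C_G(S)\le C_G(Q)$, so the induced map is exactly $\Aut_G(S)\to\Aut_G(Q)$, and it is surjective by the same fusion-control argument.

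The main obstacle, in my view, is the first step: upgrading the cyclicity of $\Out_G(S)$ (already in \cite[Proposition 3.2]{Di}) to the sharp bound $|\Out_G(S)|$ dividing $p-1$. This rests on extracting the fixed eigenvalue $\bar d=\bar1$ from the conjugation relation $\ls yx=x^{1+p^l}$ in both regimes $n>l$ and $n\le l<m$, a computation that genuinely uses nonabelianity ($l<m$) and degenerates in the abelian case $l=m$. A secondary subtlety is verifying, in the last step, that the normal $p$-subgroup $P_Q$ is in fact the full Sylow $p$-subgroup of $\Aut_G(Q)$ rather than a proper normal $p$-subgroup; this is exactly where one uses that the complementary quotient is a $p'$-group.
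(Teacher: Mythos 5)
Your proposal is correct and follows essentially the same route as the paper, which gives only a sketch (the matrix computations preceding the lemma, together with the displayed surjection for $Q\car S$); you supply the details left implicit there, namely that $\operatorname{Inn}_G(S)=SC_G(S)/C_G(S)$ is a normal Sylow $p$-subgroup of $\Aut_G(S)$, the Schur--Zassenhaus complement $A$ and the lift $w$, and the switching-sides form of Stancu's fusion control to transport the splitting $\Aut_G(Q)=O_p(\Aut_G(Q))A$ to every $Q\leq S$. Your closing caveat is also well taken: the bound $|A|$ dividing $p-1$ genuinely uses $l<m$, and the paper itself records that the computation yields no constraint when $l=m=n$ (where $\Out_G(S)$ can be any $p'$-subgroup of $\GL_2(p)$), so the lemma should be read with $S$ nonabelian --- which is the only case in which it is invoked later (Remark~\ref{rem:aut-mc} and Proposition~\ref{prop:main}) --- and this is a restriction on the statement's hypotheses rather than a defect of your argument.
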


For this last statement, note that if $1<Q\car S$, then the inclusion $C_G(S)\leq N_G(S)\cap C_G(Q)$ yields a surjective group homomorphism
$$\xymatrix{N_G(S)/C_G(S)\ar@{->>}[r]& N_G(S)/\big(N_G(S)\cap C_G(Q)\big)}\cong N_G(S)C_G(Q)/C_G(Q)=N_G(Q)/C_G(Q).$$

\begin{rem}\label{rem:aut-mc}
In Section~\ref{sec:et-mc}, we will need a more detailed information about the $p$-local subgroups of a finite group $G$ with a split metacyclic Sylow $p$-subgroup $S$ such that $E\not\leq Z(S)$.
Specifically, we want to describe $N_G(Q)$ for $1<Q\leq E$.

Let $z=x^{p^{m-1}},\;u=y^{p^{n-1}}$, and write $E=\langle u,z\rangle,\;Q_j=\langle uz^j\rangle\quad(0\leq j<p)$ and $Q_p=Z=\langle z\rangle$. 
Note that $E$ and $Z=\Omega_1(Z(S))=E\cap Z(S)$ are characteristic subgroups of $S$.
We have $C_S(Q_j)=C_S(E)=\langle x^p,y\rangle\car S$ and $\ls xQ_j=Q_{j+1}$ for all $0\leq j<p$, where the index is taken modulo $p$.
For convenience, let $S_E=C_S(E)$ and $N=N_G(S)=SC_G(S)\langle w\rangle$ for some $w\in N$, as described in Lemma~\ref{lem:aut-mc}. 
Since $N_G(Q)=(N\cap N_G(Q))C_G(Q)$ with equality if $Q\car S$, and since $C_G(S)\leq C_G(Q)$, conjugation by $g\in N_G(Q)$ on $Q$ can be realised by an element of $S\langle w\rangle$.

Write $\ls wx=x^ay^c$ and $\ls wy=x^by^d$. Then the above relations \eqref{x-power}--\eqref{eq:y-conj} with $1\leq l,n<m=l+n$ yield
$$b\equiv0\pmod{p^l}\;,\hspace{.8cm}c\equiv0\pmod{p^{n-l}}\hspace{.8cm}\hbox{and}\hspace{.8cm}d=1,$$
since Equation~\eqref{eq:y-conj} gives $d\equiv1\pmod{p^{m-l}}$ and $n=m-l$.
It follows that $\ls wz=z^a$, and $\ls wu=u$. 
That is, $\ls wQ_j=Q_{j+a}$ for $1\leq j\leq p-1$ and $w\in N_G(Z)\cap C_G(Q_0)$. 
Since $\ls xQ_j=Q_{j+1}$, it follow that $N_G(Q_j)=C_G(Q_j)=\ls{x^j}C_G(Q_0)\geq\langle\ls{x^j}w\rangle$ for all $0\leq j<p$. 
Moreover, $S_EC_G(S)\langle w\rangle\leq C_G(Q_0)$, whilst $N_G(Z)=C_G(Z)\langle w\rangle\neq C_G(Z)$.

To summarise, we have 
$$N_G(P)=\left\{\begin{array}{rll}
C_G(E)S\langle w\rangle&\geq C_G(S)S\langle w\rangle&\qbox{if $P=E$}\\
C_G(Z)\langle w\rangle&\geq C_G(S)S\langle w\rangle&\qbox{if $P=Z$}\\
C_G(P)&\geq S_EC_G(S)\langle\ls{x^i}w\rangle&\qbox{if $P=Q_i$ for $0\leq i<p$.}
\end{array}\right.$$

\end{rem}

%%%%%%%%%%%%%%%%%%%

\section{Orbit category and endotrivial modules in the metacyclic case}\label{sec:et-mc}

Throughout this section, we assume the following.
\begin{nota}\label{nota:et}
Let $G$ be a finite group and let $p$ be an odd prime dividing $|G|$ such that $S\in\Syl_p(G)$ is metacyclic noncyclic.
Write $S=\langle x,y\rangle(p^m,p^n,1+p^l,p^q)$ with $1\leq l,m,n,q$ and $q,l\leq m\leq m+n$, and write $E=\Omega_1(S)$.
\end{nota}
Without loss of generality, we may assume that $G$ has no proper strongly $p$-embedded subgroup, i.e. $G=G_0=\langle N_G(Q)\mid1<Q\leq S\rangle$, since the restriction $\Res^G_{G_0}:T(G)\to T(G_0)$ is an isomorphism.
That is:
\begin{itemize}
\item If $S$ is nonsplit nonabelian and $p$ is odd, then \cite[Theorem 1.3, Note]{Di} shows that $G$ is $p$-nilpotent, and therefore its subgroups too and $S$ controls $p$-fusion in $G$. 
Then, $N_G(Q)=O_{p'}(C_G(Q))\rtimes S_Q$, where $S_Q\in\Syl_p(N_G(Q))$ for all $1<Q\leq S$, and $G=\langle S,O_{p'}(C_G(Q))\mid1<Q\leq E\rangle$, where it suffices to take one subgroup per conjugacy class.
\item If $S$ is cyclic (i.e. nonsplit abelian), then $G=N_G(\Omega_1(S))$, as recalled above and in \cite{MT}.
\item If $S$ is split metacyclic, then $N_G(Q)\leq N_G(\Omega_1(Q))=(N_G(S)\cap N_G(\Omega_1(Q)))C_G(\Omega_1(Q))$ for all $1<Q\leq S$. 
Therefore, $G=\langle w, C_G(Q)\mid1<Q\leq E\rangle$, for $w$ as in Remark~\ref{rem:aut-mc}.
Note that, if $E\not\leq Z(S)$, then $G=\langle C_G(Q),~C_G(Z)\rangle$, by Lemmas~\ref{lem:mc} and~\ref{lem:mc-rels}.
\end{itemize}

We want to determine $\pi_1(\CO^*_p(G))$.
If $S$ is cyclic, then \cite[Corollary 4.14]{Gr} shows that $\pi_1(\CO^*_p(G))\cong G/O^{p'}(G)$, assuming that $G=N_G(E)$. 

If $S$ is nonabelian nonsplit, then $G$ is $p$-nilpotent and if $G$ has no proper strongly $p$-embedded subgroup, then $H_1(\CO^*_p(G))\cong G/G'S$, cf. \cite[Theorem E and Section 6.4]{Gr}.
Thus, it remains to consider the case when $S$ is split metacyclic. In view of Proposition~\ref{prop:omega1central}, we distinguish two cases depending on whether $E\leq Z(S)$ or not.

\begin{prop}\label{prop:main}
Let $p$ be an odd prime, let $G$ be a finite group with order divisible by $p$ and let $S\in\Syl_p(G)$. Suppose that $S$ is split metacyclic and that $G$ has no strongly $p$-embedded subgroup. Let $E=\Omega_1(S)$.
\begin{enumerate}
\item If $E\leq Z(S)$, then $\pi_1(\CO^*_p(G))\cong N_G(S)/\langle N_G(S)\cap O^{p'}(N_G(P))\mid1<P\leq E\rangle$.
\item If $E\not\leq Z(S)$, then $\pi_1(\CO^*_p(G))\cong N_G(S)/R$, where $R$ is generated by the elements of $N_G(S)\cap O^{p'}(N_G(Z))$ and those of $N_G(S)$ that can be written as a product of the form 
$g=hh't$, with with $h\in O^{p'}(N_G(E))\cap C_G(Q)$, $h'\in O^{p'}(C_G(Q))$ and $t\in S$. 
\end{enumerate}
\end{prop}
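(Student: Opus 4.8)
The plan is to settle part (1) at once and then concentrate on part (2). Part (1) is immediate: when $E=\Omega_1(S)\leq Z(S)$, Theorem~\ref{thm:stancu} shows that $N_G(S)$ controls $p$-fusion (as $p$ is odd and $S$ is metacyclic), so the hypotheses of Theorem~\ref{thm:main} hold and the asserted isomorphism is exactly its conclusion. For part (2) I start from Grodal's description $\pi_1(\CO^*_p(G))\cong N_G(S)/R$ of Theorem~\ref{thm:gr} and aim to identify $R$ with the subgroup $R^*$ in the statement. Renaming Grodal's chain subgroups to $P_0,\dots,P_n$ (to avoid clashing with the lines $Q_j$), I first reduce to subgroups of $E$: since $\Omega_1(P)$ is characteristic in $P$ for every $p$-group $P$, replacing each $P_i$ by $\Omega_1(P_i)$ preserves every inclusion and conjugation condition of Theorem~\ref{thm:gr}, exactly as in Remark~\ref{rem:rho-sgps}(1) and the proof of Theorem~\ref{thm:main}. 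Hence I may assume every subgroup occurring is $E$, the central line $Z=\Omega_1(Z(S))$, or one of the noncentral lines $Q_0,\dots,Q_{p-1}$ of Remark~\ref{rem:aut-mc}.

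I will use the following structural facts, all drawn from Lemma~\ref{lem:mc-rels} and Remark~\ref{rem:aut-mc}: the $Q_j$ form a single $S$-orbit, cycled by $x$, with $N_G(Q_j)=C_G(Q_j)=\ls{x^j}{C_G(Q_0)}$; the line $Z$ is normalised by all of $N_G(E)=C_G(E)S\langle w\rangle$; and, since the image of $O^{p'}(N_G(E))$ in $\Aut_G(E)$ is the group of transvections (the image of $S$), Dedekind's identity yields $O^{p'}(N_G(E))=S\cdot\big(C_G(E)\cap O^{p'}(N_G(E))\big)$, the second factor centralising $E$ and hence lying in $O^{p'}(N_G(E))\cap C_G(Q_0)$.

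The core is a token-tracking rewriting of a Grodal product $g=g_1\cdots g_n\in N_G(S)$, following the point $P_0$ through the positions $P_0^{g_1\cdots g_i}$. If $P_0=Z$ the token never moves, because every admissible step lies in $O^{p'}(N_G(Z))$ or $O^{p'}(N_G(E))$ and both normalise $Z$; the resulting elements are built from $N_G(S)\cap O^{p'}(N_G(Z))$ and $N_G(S)\cap O^{p'}(N_G(E))$, and each member of $O^{p'}(N_G(E))$ has the form $hh't$ (with $h'=1$) on writing it as $sc=(scs^{-1})s$ via the factorisation above. If $P_0=Q_j$ is noncentral, then centralising steps in $O^{p'}(C_G(Q_j))$ fix the token, and only the $S$-part of an $O^{p'}(N_G(E))$-step moves it, cyclically, by a power of $x$; in particular the token can never reach $Z$, so the two families stay separate. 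Splitting each $E$-step as $x^e d$ with $d\in C_G(E)\cap O^{p'}(N_G(E))$, and writing a centralising factor at position $Q_j$ as $\ls{x^j}{c_0}$ with $c_0\in O^{p'}(C_G(Q_0))$, I transport all the $x$-powers to the right into a single $t\in S$, leaving a word in $O^{p'}(N_G(E))\cap C_G(Q_0)$ and $O^{p'}(C_G(Q_0))$, which is the shape $hh't$. The reverse inclusion $R^*\subseteq R$ I record by exhibiting short admissible chains: $N_G(S)\cap O^{p'}(N_G(Z))$ comes from the length-one chain $P_0=P_1=Z$, and a generator $g=hh't$ comes from the chain with $P_0=Q_0$ and $(P_1,P_2,P_3)=(E,Q_0,E)$, which is admissible precisely because $h,h'\in C_G(Q_0)$ keep the token at $Q_0$.

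The main obstacle is the bookkeeping in the forward rewriting while respecting membership in $N_G(S)$: the individual centralising factors $\ls{x^j}{c_0}$ need not normalise $S$, so one cannot simply declare each one a generator, and the real content is to show that, after collecting the $x$-powers into the terminal $t$, the remaining centralising word organises into (a product of) genuine $hh't$-generators each lying in $N_G(S)$. As in the proof of Theorem~\ref{thm:main}, this is where control of $p$-fusion by $N_G(S)$ (Theorem~\ref{thm:stancu}) together with Frattini's argument are decisive, matching the $p'$-parts $O^{p'}(C_G(Q_0))$ and $O^{p'}(C_G(E))$ against their intersections with $N_G(S)$; the transitivity of the $S$-action on $\{Q_0,\dots,Q_{p-1}\}$ and the $N_G(E)$-invariance of $Z$ then guarantee that no relations beyond the two displayed families survive.
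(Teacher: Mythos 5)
Your proposal is correct and follows essentially the same route as the paper: part (1) via Theorem~\ref{thm:main} and Theorem~\ref{thm:stancu}, and part (2) by reducing Grodal's chains to subgroups of $E$ (Remark~\ref{rem:rho-sgps}), splitting according to whether the starting subgroup is $Z$/$E$ (strongly closed, so the chain collapses into $O^{p'}(N_G(Z))$) or a noncentral line $Q_j$, and then rewriting each $O^{p'}(N_G(E))$-factor as a $C_G(Q)$-part times an element of $S$ and pushing the $S$-parts to the right to obtain the shape $hh't$. Your explicit verification of the reverse inclusion $R^*\subseteq R$ via short admissible chains is a small addition the paper leaves implicit, but the substance of the argument is the same.
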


\begin{proof}
 If $E\leq Z(S)$, the statement holds by Theorem~\ref{thm:main}. So it remains to show the proposition in the case when $E$ is not a central subgroup of $S$.
 
Suppose $E\not\leq Z(S)$. 
We use the notation of Remark~\ref{rem:aut-mc} and set $S=\langle x,y\rangle(p^m,p^n,1+p^l)$, $Z=E\cap Z(S)=\langle x^{p^{m-1}}\rangle$, $Q=Q_0=\langle\;u=y^{p^{n-1}}\rangle$, and $C_S(E)=S_E$.
Picking $w\in G$ as in Remark~\ref{rem:aut-mc}, we have $N_G(S)=SC_G(S)\langle w\rangle$, $N_G(Z)=N_G(S)C_G(Z)=C_G(Z)\langle w\rangle$, and $N_G(Q)=C_G(Q)\geq C_G(E)\langle w\rangle$.
Recall that $E$ and $Z$ are both strongly closed in $S$ with respect to $G$, i.e. no element of $E$ is $G$-conjugate to an element of $S$ outside $E$, and similarly for $Z$. 

Note that $O^{p'}(N_G(E))\leq O^{p'}(N_G(Z))=\langle\ls gS\mid g\in[C_G(Z)/SC_G(S)]\rangle$, since $SC_G(S)=N_G(S)\cap C_G(Z)$, and similarly
$O^{p'}(N_G(Q))=\langle\ls gS_E\mid g\in[C_G(Q)/S_EC_G(S_E)\langle w\rangle]\rangle$, where $N_G(S_E)\cap C_G(Q)=S_EC_G(S_E)\langle w\rangle$ has index $p$ in $N_G(S_E)$.

Let $R$ be the subgroup of $N_G(S)$ defined in Theorem~\ref{thm:gr}.
That is, $R$ is generated by all the elements $g\in N_G(S)$ such that there exist $1<P_0,\dots,P_n\leq S$ and elements $g_i\in O^{p'}(N_G(P_i))$ for all $1\leq i\leq n$ with
\begin{itemize}
\item $g=g_1\cdots g_n$, and 
\item $P_0^{g_1\cdots g_i}\leq P_{i+1}$, for all $1\leq i\leq n-1$.
\end{itemize}

By Remark~\ref{rem:rho-sgps}, it suffices to consider the nontrivial subgroups of $E$ to determine $R$.
Let $g\in R$, with factorisation $g=g_1\cdots g_n$  as above. 

If $P_0=E$ or if $P_0=Z$, then we can take $n=1$ and $P_1=P_0$ since $E$ and $Z$ are both strongly closed in $S$ with respect to $G$ and $N_G(Z)\geq N_G(E)$. 
It follows that in these cases $g\in N_G(S)\cap O^{p'}(N_G(Z))$.

If $P_0=Q$, then $P_1=Q$ or $P_1=E$. 
By taking $g_i=1$ if necessary, we may then assume that we have a sequence $P_0=P_1=Q$, $P_{2i}=E$ and $P_{2i+1}=Q^{g_1\cdots g_{2i}}$ for all $i\geq1$. 
(Note also that $P_1=Q^{g_1}=Q,\;P_3=Q^{g_1g_2g_3}=Q^{g_2}$, etc, giving in fact $P_{2i+1}=Q^{g_2\cdots g_{2i}}$.)

Since the noncentral subgroups of $E$ are $S$-conjugate and $N_G(Q)=C_G(Q)$, if $g\in O^{p'}(N_G(E))$, then $Q,Q^g\leq E$ and so, there exist $t\in S$ and $h\in C_G(Q)$ such that $g=ht$.
From the inclusion $S\leq O^{p'}(N_G(E))$ it follows that $h\in O^{p'}(N_G(E))\cap C_G(Q)$.
Proceeding stepwise, we obtain that $g_1=h_1\in O^{p'}(C_G(Q))$, then $g_2=h_2t_2$ with $h_2\in O^{p'}(N_G(E))\cap C_G(Q)$ and $t_2\in S$, giving $Q^{g_1g_2}=Q^{t_2}$.
Next, we have $g_3=h_3^{t_2}\in O^{p'}(C_G(Q^{t_2}))=O^{p'}(C_G(Q))^{t_2}$, and then $g_4\in O^{p'}(N_G(E))\cap C_G(Q)^{t_2}S$, say $g_4=h_4^{t_2}t_4$ with $h_4\in O^{p'}(N_G(E))\cap C_G(Q)$ and $t_4\in S$.
To summarise, the factorisation of $g$ starts with 
$$g_1g_2g_3g_4=h_1h_2t_2h_3^{t_2}h_4^{t_2}t_4=\underbrace{h_1h_2h_3h_4}_{\in C_G(Q)}\underbrace{t_2t_4}_{\in S}.$$
Iteratively, $g=g_1\cdots g_n=h_1\cdots h_nt$, with $h_{2i-1}\in O^{p'}(C_G(Q))$, $h_{2i}\in O^{p'}(N_G(E))\cap C_G(Q)$ and $t\in S$, such that $g\in N_G(S)\cap C_G(Q)S=SC_G(S)\langle w\rangle$.

Since $O^{p'}(C_G(Q))$ is normalised by $O^{p'}(N_G(E))\cap C_G(Q)$, we can write $h_1\cdots h_nt=hh't$ with $h\in O^{p'}(N_G(E))\cap C_G(Q)$, $h'\in O^{p'}(C_G(Q))$ and $t\in S$. Thus $g=hh't$.

\end{proof}

\begin{rem}
As concluding remark, let us emphasise the point observed above, that, in the assumptions of Proposition~\ref{prop:main}, the case when $E$ is not central in $S$ implies that $S$ has a cyclic centre. Thus, the second case of the proposition concerns a very particular type of metacyclic $p$-groups.
\end{rem}

%\begin{rem}\label{rem:op}
%Write $C_G(Q)=O^{p'}(C_G(Q))N_{C_G(Q)}(S_E)=O^{p'}(C_G(Q))S_EC_G(S_E)\langle w\rangle$ and similarly $N_G(E)=O^{p'}(N_G(E))SC_G(S)\langle w\rangle$. We get the $p'$-quotient groups
%$$C_G(Q)/O^{p'}(C_G(Q))\cong \frac{C_G(S_E)\langle w\rangle}{O^{p'}(C_G(Q))\cap C_G(S_E)\langle w\rangle}$$
%and
%$$N_G(E)/O^{p'}(N_G(E))\cong\frac{C_G(S)\langle w\rangle}{O^{p'}(N_G(E))\cap C_G(S)\langle w\rangle}.$$
%
%By definition, $O^{p'}(X)$ is the subgroup of $X$ generated by its $p$-elements, for any finite group $X$. 
%In the assumptions of the second part of Proposition~\ref{prop:main}, we have $N_G(E)\cap C_G(Q)=C_G(E)\langle w\rangle$, and therefore $O^{p'}(N_G(E))\cap C_G(Q)$ is generated by the $p$-elements of 
%
%\end{rem}
%
%The next corollary concludes the proof of Theorem~\ref{thm:main2}.
%
%\begin{cor}\label{cor:et-mc-split}
%Let $p$ be an odd prime, let $G$ be a finite group with order divisible by $p$ and let $S\in\Syl_p(G)$. Suppose that $S$ is split metacyclic.
%Then, using the notation used in the proof of Theorem~\ref{thm:main2}, we have 
%$$K(G)\cong N_G(S)\big/\langle N_G(S)\cap C_S(E)N_G(Q)'~,~ N_G(S)\cap SN_G(Z)'\rangle.$$
%\end{cor}
%

%%%%

\end{document}